
\documentclass[11pt]{amsart}
\usepackage{amssymb,amsmath,mathrsfs}
\usepackage[ps2pdf,colorlinks=true,urlcolor=blue,
citecolor=red,linkcolor=blue,linktocpage,pdfpagelabels,bookmarksnumbered,bookmarksopen]{hyperref}
\usepackage{epsfig,graphicx,color}
\usepackage[english]{babel}

\usepackage[left=2.5cm,right=2.5cm,top=2.7cm,bottom=2.7cm]{geometry}

\newcommand{\N}{{\mathbb N}}

\newcommand{\C}{{\mathbb C}}
\newcommand{\R}{{\mathbb R}}
\newcommand{\iu}{{\rm i}}
\newcommand{\eps}{\varepsilon}
\renewcommand{\H}{{\mathbb H}}
\newcommand{\im}{{\Im}}
\newcommand{\re}{{\Re}}

\numberwithin{equation}{section}
\newtheorem{theorem}{Theorem}[section]
\newtheorem{proposition}[theorem]{Proposition}

\newtheorem{property}[theorem]{Property}
\newtheorem{lemma}[theorem]{Lemma}
\newtheorem{definition}[theorem]{Definition}
\theoremstyle{definition}
\newtheorem{remark}[theorem]{Remark}

\title[Point particle dynamics for general NLS problems]{Soliton dynamics
for a general \\ class of Schr\"odinger equations}

\author[R.\ Servadei]{Raffaella Servadei$^*$}
\author[M.\ Squassina]{Marco Squassina$^\dagger$ }

\thanks{$^*$Department of Mathematics,
University of Calabria,
Ponte Pietro Bucci 31B \newline
I-87036 Arcavacata di Rende,
Cosenza, Italy.
E-mail: {\em servadei@mat.unical.it}}

\thanks{$^\dagger$Department of Computer Science,
University of Verona,
C\`a Vignal 2, Strada Le Grazie 15 \newline
I-37134 Verona, Italy.
E-mail: {\em marco.squassina@univr.it}}

\address{Department of Mathematics
\newline\indent
University of Calabria
\newline\indent
Ponte Pietro Bucci 31B,
I-87036 Arcavacata di Rende,
Cosenza, Italy}
\email{servadei@mat.unical.it}

\address{Department of Computer Science
\newline\indent
University of Verona
\newline\indent
C\'a Vignal 2, Strada Le Grazie 15, I-37134 Verona, Italy}
\email{marco.squassina@univr.it}

\thanks{Both authors were supported by the 2007 MIUR national research
project entitled:\newline {\em ``Variational and Topological Methods in the Study of
Nonlinear Phenomena''}}
\subjclass[2000]{83C50, 81Q05, 35Q40, 35Q51, 35Q55, 37K40, 37K45}
\keywords{Nonlinear Schr\"odinger equations, magnetic fields, soliton dynamics}

\begin{document}

\begin{abstract}
The soliton dynamics  for
a general class of nonlinear focusing Schr\"odinger problems in presence of non-constant
external (local and nonlocal) potentials is studied by taking as initial datum the ground state
solution of an associated autonomous elliptic equation.
\end{abstract}
\maketitle

\section{Introduction and main result}\label{sec:intro}

\subsection{Introduction}
The aim of this paper is to study a general class of scalar and vectorial Schr\"odinger equations
in presence of local and nonlocal potentials, modelling an electric and magnetic field
and a Newtonian type interaction, respectively. This class of problems includes various
physically meaningful particular cases, that will be individually described in details later in this section.
In fact, we would also like to discuss the latest developments available in literature for this kind of issue, particularly when
approached via the technique initiated by the 2000 work of R.~Jerrard and J.~Bronski~\cite{bronski}.
More precisely, let $m\geq 1$, $N\geq 1$, $0<p<2/N$, $\eps>0$ and let
\begin{equation}\label{firo}
V:\R^N\to\R,
\qquad
A:\R^N\to\R^N,
\qquad
\Phi:\R^N\to\R,
\end{equation}
be $C^3(\R^N)$ functions satisfying suitable assumptions that will be
stated in the following. Then, if $\iu$ denotes the complex imaginary unit,
consider the Schr\"odinger equation
\begin{equation}
    \label{problematilde}
    \tag{$S$}
    \begin{cases}
        -\iu \eps \partial_t\zeta_\eps^j+ L_A\zeta_\eps^j+V(x)\zeta_\eps^j=|\zeta_\eps|^{2p}_j \zeta_\eps^j
        +\frac{1}{\eps^N}\Phi*|\zeta_\eps|^2_j\,\zeta_\eps^j & \text{in $\R^N\times (0,\infty)$}, \\
        \noalign{\vskip5pt}
        \zeta_\eps^j(x,0)=\zeta_0^j(x) & \text{in $\R^N$}, \\
        \noalign{\vskip3pt}
        j=1,\dots,m,
    \end{cases}
\end{equation}
where $\zeta_\eps=(\zeta^1_\eps,\dots,\zeta^m_\eps):\R^N\times \R^+\to\C^m$ is the unknown, the magnetic operator $L_A$
is defined as
\begin{equation*}
L_A\zeta:=-\frac{\eps^2}{2}\Delta\zeta-\frac{\eps}{\iu}A(x)\cdot\nabla\zeta
+\frac{1}{2}|A(x)|^2\zeta-\frac{\eps}{2\iu}{\rm div}_xA(x)\zeta,
\end{equation*}
the convolution is denoted by $(\Phi*v)(x):=\int\Phi(x-y)v(y)dy$, and
$$
|\zeta|_j^{2p}:=\alpha_j|\zeta^j|^{2p}
+\sum_{i\neq j}^m \gamma_{ij}|\zeta^i|^{p+1}|\zeta^j|^{p-1},\qquad
|\zeta|^2_j:=\beta_j|\zeta^j|^2+\sum_{i\neq j}^m\omega_{ij}|\zeta^i|^2,
$$
for some nonnegative constants $\alpha_i,\beta_i,\gamma_{ij},\omega_{ij}$ such that
$\gamma_{ij}=\gamma_{ji}$ and $\omega_{ij}=\omega_{ji}$, for all $i,j=1,\dots,m$.
By rescaling problem~\eqref{problematilde} with $\phi_\eps(x,t)=\zeta_\eps(\eps x, \eps t)$,
we reach the following system, where $\eps$ appears
now only in the arguments of the potentials $V$, $A$ and $\Phi$
\begin{equation}
    \label{problema}
    \tag{$P$}
    \begin{cases}
        -\iu \partial_t\phi_\eps^j+L_A\phi_\eps^j+V(\eps x)\phi_\eps^j=|\phi_\eps|^{2p}_j \phi_\eps^j
        +\Phi(\eps x)*|\phi_\eps|^2_j\,\phi_\eps^j & \text{in $\R^N\times (0,\infty)$}, \\
        \noalign{\vskip5pt}
        \phi_\eps^j(x,0)=\phi_0^j(x) & \text{in $\R^N$}, \\
        \noalign{\vskip3pt}
        j=1,\dots,m,
    \end{cases}
\end{equation}
with $\phi_\eps=(\phi^1_\eps,\dots,\phi^m_\eps):\R^N\times
\R^+\to\C^m$ and
\begin{equation}
\label{MagnOper}
L_A\phi:=-\frac{1}{2}\Delta\phi-\frac{1}{\iu}A(\eps
x)\cdot\nabla\phi +\frac{1}{2}|A(\eps x)|^2\phi-\frac{1}{2\iu}{\rm
div}_xA(\eps x)\phi.
\end{equation}
As we have already recalled, here $V:\R^N\to\R$ and $A:\R^N\to\R^N$ are an {\em electric} and
{\em magnetic} potentials respectively. The magnetic field $B$ is
$B=\nabla\times A$ in $\R^3$ and can be thought (and identified)
in general dimension as a $2$-form ${\mathbb H}^B$ of coefficients
$(\partial_i A_j-\partial_j A_i)$. We will keep
using the notation $B=\nabla\times A$ in any dimension $N$.

\vskip2pt
\noindent
We point out that the general Schr\"odinger problem~\eqref{problematilde} we aim to investigate contains,
as particular cases, the following physically meaningful situations.
\vskip8pt
\noindent
{\bf Class I.}
If $m=1$, $A=0$, $\beta_j=\omega_{ij}=\gamma_{ij}=0$ and $\alpha_j=1$, one finds:
    \begin{equation*}
    \begin{cases}
	\iu\eps\partial_t\zeta_\eps+\frac{\eps^2}{2}\Delta\zeta_\eps-V(x)\zeta_\eps
        +|\zeta_\eps|^{2p}\zeta_\eps=0 & \text{in $\R^N\times(0,\infty)$,} \\
        \noalign{\vskip3pt}
        \zeta_\eps(x,0)=\zeta_0(x) & \text{in $\R^N$}.
    \end{cases}
\end{equation*}
This is the classical {\em Schr\"odinger equation} with a spatial potential. For general results about local and global
existence of solutions, regularity, orbital stability and instability, we refer the reader
to~\cite{cazenave} and to the references therein. From the point of view of the semi-classical analysis
of standing wave solutions $\zeta_\eps(x,t)=u_\eps(x)e^{-\iu Et}$ for $E\in\R$, the Schr\"odinger equation reduces to
a semi-linear elliptic equation. In the last few years a huge literature has developed starting from
the celebrated paper by Floer and Weinstein~\cite{FW} (see
the monograph~\cite{ambook} by Ambrosetti and Malchiodi and references therein).
Concerning the soliton (or, equivalently, point-particle) dynamics, that is
the study of the qualitative behaviour of the solutions of this equation by choosing as initial datum a suitably rescaled
ground state solution of an associated elliptic problem, we refer e.g.~to the works~\cite{bronski,FGJS,GSS,Keerani2}
and to the recent monograph~\cite{carlesb} (see also e.g.~\cite{KN,KM}
for works in the mathematical physics community). Very recently, in~\cite{BeghMi},
Benci, Ghimenti and Micheletti provided the first result on the soliton dynamics
with uniform global estimates in time.
\vskip4pt
\noindent
{\bf Class II.}  If $m=1$, $\beta_j=\omega_{ij}=\gamma_{ij}=0$ and $\alpha_j=1$, one finds:
    \begin{equation*}
    \begin{cases}       \iu\eps\partial_t\zeta_\eps-\frac{1}{2}\big(\frac{\eps}{\iu}\nabla-A(x)\big)^2\zeta_\eps-V(x)\zeta_\eps
        +|\zeta_\eps|^{2p}\zeta_\eps=0 & \text{in $\R^N\times(0,\infty)$,} \\
        \noalign{\vskip3pt}
        \zeta_\eps(x,0)=\zeta_0(x) & \text{in $\R^N$}.
    \end{cases}
\end{equation*}
This is the {\em Schr\"odinger equation with a time-independent external magnetic field}. For general
facts about this equation, we refer again to~\cite{cazenave} and to the references therein.
For the semi-classical analysis of standing wave solutions, we refer the reader to the recent work~\cite{cjs}
and to the various references included. For the full (soliton) dynamics, we refer to the recent papers~\cite{selvit,squa}
which, to our knowledge, are the first contributions for this equation. In~\cite{squa}, the concentration center
is precisely the one predicted by the WKB theory.
\vskip4pt
\noindent
{\bf Class III.}
If $m=1$, $A=0$ and $\alpha_j=\gamma_{ij}=\omega_{ij}=0$, one finds:
    \begin{equation*}
    \begin{cases}       \iu\eps\partial_t\zeta_\eps+\frac{\eps^2}{2}\Delta\zeta_\eps-V(x)\zeta_\eps
        +\frac{\beta}{\eps^N}\Phi*|\zeta_\eps|^{2}\zeta_\eps=0 & \text{in $\R^N\times(0,\infty)$,} \\
        \noalign{\vskip3pt}
        \zeta_\eps(x,0)=\zeta_0(x) & \text{in $\R^N$}.
    \end{cases}
\end{equation*}
This is the {\em Hartree or Newton-Schr\"odinger type equation}. For basic facts about this equation, we
refer again to~\cite{cazenave} and references therein. For the study of standing waves in the semi-classical
regime, we refer to~\cite{WiWe} and the references included. The physical motivations for these equations
were detected by Penrose who derived the Schr\"odinger-Newton equation
by coupling the linear 3D Schr\"odinger equation with the Newton law of gravitation, yielding
\begin{equation*}
    \begin{cases}
\iu \eps \partial_t\zeta_\eps+\frac{\eps^2}{2}\Delta\zeta_\eps-V(x)\zeta_\eps+\Psi_\eps*\zeta_\eps=0
& \text{in $\R^3\times (0,\infty)$},\\
\noalign{\vskip4pt}
-\eps^2\Delta \Psi_\eps=\mu |\zeta_\eps|^2    & \text{in $\R^3$},
\end{cases}
\end{equation*}
where $\mu$ is a positive constant.
Of course, this system is equivalent to the nonlocal equation
\begin{equation*}
\iu \eps \partial_t\zeta_\eps+\frac{\eps^2}{2}\Delta\zeta_\eps-V(x)\zeta_\eps+
\Psi_\eps*|\zeta_\eps|^2\zeta_\eps=0\quad\text{in $\R^3\times (0,\infty)$},
\qquad
\Psi_\eps(x)=\frac{\mu}{4\pi\eps^{2}}\frac{1}{|x|}.
\end{equation*}
For the study of point-particle dynamics for this equation with
smooth nonlocal potentials, we refer the reader to~\cite{yau}, where the authors follow an
approach different from that used in~\cite{bronski,Keerani2}.
\vskip4pt
\noindent
{\bf Class IV.}
If $m=1$ and $\alpha_j=\gamma_{ij}=\omega_{ij}=0$, one finds:
    \begin{equation*}
    \begin{cases}       \iu\eps\partial_t\zeta_\eps-\frac{1}{2}\big(\frac{\eps}{\iu}\nabla-A(x)\big)^2\zeta_\eps-V(x)\zeta_\eps
        +\frac{\beta}{\eps^N}\Phi*|\zeta_\eps|^{2}\zeta_\eps=0 & \text{in $\R^N\times(0,\infty)$,} \\
        \noalign{\vskip3pt}
        \zeta_\eps(x,0)=\zeta_0(x) & \text{in $\R^N$}.
    \end{cases}
\end{equation*}
This is the {\em Hartree type equation with magnetic field}. As for the previous cases,
concerning the basic facts about this equation, we refer to~\cite{cazenave}. With respect
to the semiclassical analysis of standing waves we are not aware of any paper. The soliton dynamics
behaviour is contained in the present paper for smooth potentials.
\vskip4pt
\noindent
{\bf Class V.} If $m=2$, $A=0$ and $\beta_j=\omega_{ij}=0$, one finds:
\begin{equation*}
    \begin{cases}
        \iu\eps\partial_t\zeta_\eps^1+\frac{\eps^2}{2}\Delta\zeta_\eps^1-V(x)\zeta_\eps^1
        +\alpha_1|\zeta_\eps^1|^{2p}\zeta_\eps^1+\gamma_{12}|\zeta_\eps^2|^{p+1}|\zeta_\eps^1|^{p-1}=0 &
        \text{in $\R^N\times(0,\infty)$,} \\
        \iu\eps\partial_t\zeta_\eps^2+\frac{\eps^2}{2}\Delta\zeta_\eps^2-V(x)\zeta_\eps^2
        +\alpha_2|\zeta_\eps^2|^{2p}\zeta_\eps^2+\gamma_{12}|\zeta_\eps^1|^{p+1}|\zeta_\eps^2|^{p-1}=0 &
        \text{in $\R^N\times(0,\infty)$,} \\
        \noalign{\vskip3pt}
        \zeta_\eps(x,0)=\zeta_0(x) & \text{in $\R^N$}.
    \end{cases}
\end{equation*}
This is the {\em weakly coupled Schr\"odinger system with two components}. With respect
to the semiclassical analysis of standing waves, in the last few years the interest for
this systems has considerably increased. We refer for instance to~\cite{aCol,lw,mps,sirakov}
for the study of the structure of the associated ground states solutions (vector versus scalar
ground states depending upon the strength of the interaction $\gamma_{12}>0$). For the behaviour in the semiclassical limit,
we refer the reader to~\cite{dw,mps}. The soliton dynamics behaviour is contained in~\cite{squamont,mmp3},
essentially in the 1D case.

\subsection{The main result}
\label{preliminari}

In this section we shall provide the suitable background
allowing us to formulate the statement of the main theorem of the paper.

\subsubsection{Framework and main ingredients}
Throughout this paper we denote by $H_{A,\eps}$ the Hilbert space
defined as the closure of $C^\infty_c(\R^N;\C^m)$ under the scalar
product
$$
(u,v)_{H_{A,\eps}}=\Re\int(D u\cdot \overline{D
v}+V(\eps x)u\overline{v})dx\,,
$$
where $Du=(D_1u,\dots,D_Nu)$ and $D_j=\iu^{-1}\partial_j-A_j(\eps x)$, with induced norm
$$
\|u\|_{H_{A,\eps}}^2=\int\Big|\frac{1}{\iu}\nabla
u-A(\eps x)u\Big|^2dx+\int V(\eps x)|u|^2dx<\infty.
$$
The dual space of $H_{A,\eps}$ is denoted by $H_{A,\eps}'$,
while the space $H_{A,\eps}^2$ is the set of $u$ such that
$$
\|u\|_{H_{A,\eps}^2}^2=\|u\|_{L^2}^2+\Big\|\big(\frac{1}{\iu}\nabla-A(\eps x)\big)^2u\Big\|_{L^2}^2<\infty.
$$
Finally, $H^1(\R^N;\C^m)$ is equipped with
the standard norm $\|\phi\|_{H^1}^2=\|\nabla\phi\|_{L^2}^2+\|\phi\|_{L^2}^2$.
We study problem~\eqref{problema} for an initial datum $\phi_0:\R^N\to\C^m$ given by
\begin{equation} \label{initialD} \tag{$I$}
\phi_0^j(x)=r_j(x-x_\eps(0))
e^{\iu[A(\eps x_\eps(0))\cdot(x-x_\eps(0))+x\cdot\xi_\eps(0)]},\qquad j=1,\dots,m
\end{equation}
where $x_0/\eps$ and $\xi_0$ are the initial position and the initial
velocity in $\R^N$ of the following first order differential system
\begin{equation}
    \label{DriveS}
\tag{$D$}
    \begin{cases}
        \dot x_\eps(t)=\xi_\eps(t), & \\
        \noalign{\vskip3pt}
        \dot \xi_\eps(t)=-\eps\nabla V(\eps x_\eps(t))-\eps\xi_\eps(t)\times B(\eps x_\eps(t)), & \\
        \noalign{\vskip1pt}
        x_\eps(0)=\frac{x_0}{\eps}, &\\
        \noalign{\vskip1pt}
        \xi_\eps(0)=\xi_0\,, &
    \end{cases}
\end{equation}
with $B=\nabla \times A$. Notice that, for the solution of~\eqref{DriveS}, we have
\begin{equation}
    \label{conversion}
x_\eps(t)=\frac{x(\eps t)}{\eps},\quad \xi_\eps(t)=\xi(\eps t),\qquad
    \begin{cases}
        \dot x(t)=\xi(t), & \\
        \noalign{\vskip3pt}
        \dot \xi(t)=-\nabla V(x(t))-\xi(t)\times B(x(t)), & \\
        \noalign{\vskip1pt}
        x(0)=x_0, &\\
        \noalign{\vskip1pt}
        \xi(0)=\xi_0. &
    \end{cases}
\end{equation}
The rescaled components $(x(t),\xi(t))$ of system~\eqref{conversion} might appear
in the proofs of some result. Notice that the initial datum referred to the original
problem~\eqref{problematilde} reads as
\begin{equation*}
\zeta_0^j(x)=\phi_0^j\Big(\frac{x}{\eps}\Big)=r_j\Big(\frac{x-x_0}{\eps}\Big)
e^{\frac{\iu}{\eps}[A(x_0)\cdot(x-x_0)+x\cdot\xi_0]},\quad
x\in\R^N,\,\, j=1,\dots,m
\end{equation*}
This is the usual formula for the (soliton) initial datum considered in~\cite{bronski,Keerani2}
when $A=0$ and in~\cite{selvit,squa} when $A\neq 0$. Furthermore, we assume that $r=(r_1,\dots,r_m)\in
H^1(\R^N,\R^m)$ is (up to translation) a real ground state solution
of the elliptic system
\begin{equation}
    \label{seMF}
\tag{$S$}
    \begin{cases}
        -\frac{1}{2}\Delta r_j+r_j=|r|^{2p}_jr_j  & \quad\text{in $\R^N$}, \\
        \noalign{\vskip3pt}
       \,\, j=1,\dots,m,
    \end{cases}
\end{equation}
with respect to the notation of $|\cdot|_j$ previously introduced. We also set
\begin{equation}\label{valueofm}
m_j:=\|r_j\|_{L^2}^2,\qquad j=1,\dots,m,\,\,\qquad
M:=\sum_{j=1}^m m_j.
\end{equation}
Notice that, setting for all $t\in\R^+$
\begin{equation}
\label{Hamilt} {\mathcal H}(t)=\frac{1}{2}|\xi_\eps(t)|^2+V(\eps x_\eps(t))+{\mathcal M},
\end{equation}
where
$$
{\mathcal M}:=-\frac{\Phi(0)}{2M}\Big\{\sum_{j=1}^m\beta_j m_j^2+\sum_{i\neq j}^m\omega_{ij}m_im_j\Big\},
$$
it follows that ${\mathcal H}$ is a first integral associated with~\eqref{DriveS}, namely
$$
{\mathcal H}(t)={\mathcal H}(0)=\frac{1}{2}|\xi_0|^2+V(x_0)+{\mathcal M},\qquad\text{for all $t\in\R^+$}.
$$
In turn, the function ${\mathcal H}$ is independent of both time and $\eps>0$.

\subsubsection{Assumptions on the potentials}

We first give the following

\begin{definition}
    \label{defadmissibilityy}
Consider the potentials $V:\R^N\to\R$, $A:\R^N\to\R^N$ and $\Phi:\R^N\to\R$ and a ground state solution $r$
of~\eqref{seMF} which is chosen to build up the initial datum~\eqref{initialD}.
We say that $(V,A,\Phi,r)$ is an admissible string for
the point particle dynamics of problem~\eqref{problema} if $r_j$ is radially symmetric,
$x_ir_j\in L^2(\R^N)$ for all $i=1,\dots,N$ and $j=1,\dots,m$ and the following
Properties~\ref{wellP} (well-posedness) and~\ref{stabPhi}
(non-degeneracy/energy convexity inequality) hold true.
\end{definition}

\begin{property}[Well-posedness]
\label{wellP}
Assume that $0<p<2/N$.\ Then, for all $\eps>0$ and
$\phi_0\in\ H_{A,\eps}$, there exists a unique global solution
$$
\phi_\eps\in C(\R^+,H_{A,\eps})\cap C^1(\R^+,H_{A,\eps}'),
$$
of problem~\eqref{problema} with
$\sup\limits_{t\in\R^+}\|\phi_\eps(t)\|_{H_{A,\eps}}<\infty$.
Furthermore, the mass ${\mathcal N}_{\eps}^j$ associated with $\phi_\eps^j(t)$,
\begin{equation*}
{\mathcal N}_{\eps}^j(t):=\int |\phi_{\eps}^j(t)|^{2}dx, \qquad
t\in\R^+, \quad j=1,\dots, m,
\end{equation*}
and  the total energy $E_\eps$,
\begin{align*}
 E_{\eps}(t) &
   :=\frac{1}{2}\int\Big|\frac{1}{\iu}\nabla \phi_\eps(x)-A(\eps x)\phi_\eps\Big|^2dx
  +\int V(\eps x)|\phi_\eps(x)|^2dx
-\frac{1}{p+1}\sum_{j=1}^m\alpha_j\int |\phi_\eps^j(x)|^{2p+2}dx  \\
& -\frac{1}{p+1}\sum_{i,j,\,i\neq j}^m \gamma_{ij}\int |\phi_\eps^i(x)|^{p+1}|\phi_\eps^j(x)|^{p+1}dx \notag \\
& -\frac{1}{2}\sum_{j=1}^m\beta_j \iint \Phi(\eps(x-y))|\phi_\eps^j(x)|^2|\phi_\eps^j(y)|^2 dx dy \\
& -\frac{1}{2}\sum_{i,j,\,i\neq j}^m\omega_{ij} \iint
\Phi(\eps(x-y))|\phi_\eps^i(x)|^2|\phi_\eps^j(y)|^2 dx dy,  \qquad t\in\R^+,
\end{align*}
are conserved in time, namely
$$
{\mathcal N}_{\eps}^j(t)={\mathcal N}_{\eps}^j(0)\quad\text{and}\quad E_{\eps}(t)=
E_{\eps}(0),\qquad\text{ for all $t\in\R^+$,} \quad j=1,\dots,m.
$$
Finally if $\phi_0\in H_{A,\eps}^2$, then $\phi_\eps\in
C(\R^+,H_{A,\eps}^2)\cap C^1(\R^+,L^2(\R^N;\C^m))$.
\end{property}

\noindent
We also consider the functional ${\mathcal E}:H^1(\R^N;\R^m)\to\R$ associated with system~\eqref{seMF}
\begin{equation*}
{\mathcal E}(u) =\frac{1}{2}\int |\nabla u(x)|^2dx
-\sum_{j=1}^m\frac{\alpha_j}{p+1}\int |u_j(x)|^{2p+2}dx
 -\sum_{i,j,\,i\neq j}^m \frac{\gamma_{ij}}{p+1}\int |u_i(x)|^{p+1}|u_j(x)|^{p+1}dx.
\end{equation*}
In a large range of relevant situations, a ground state
solution $r$ of~\eqref{seMF} satisfies the characterization
\begin{equation}
\label{variatcaract-r} {\mathcal E}(r)=\min\{{\mathcal E}(u): u\in
H^1(\R^N,\R^m),\,\|u\|_{L^2}=\|r\|_{L^2}\}.
\end{equation}

\noindent
For $m=1$ this is a classical fact. For $m=2$ see e.g.~\cite{mmp2}.

\noindent We consider now the following

\begin{property}[Non-degeneracy/Energy convexity inequality]
    \label{stabPhi}
There exist two positive constants $C$ and $C'$ such that the following
condition holds: if $U\in
H^1(\R^N;\C^m)$ is such that $\|U\|_{L^2}=\|r\|_{L^2}$, where $r$
is a ground state solution of~\eqref{seMF}, then
\begin{equation}
\Gamma_U \leq C\left(\mathcal E(U)-\mathcal E(r)\right),
\end{equation}
where
\begin{equation}
    \label{gammaUdef}
\Gamma_{U}=\inf_{\overset{y\in\R^N}{
\theta_1, \dots, \theta_m\in [0,2\pi)}
}\|U(\cdot)-\big(e^{\iu \theta_1}r_1(\cdot +y), \dots,
e^{\iu \theta_m}r_m(\cdot +y)\big) \|_{H^1}^{2},
\end{equation}
provided that $\Gamma_U<C'$.
\end{property}
The energy convexity inequality is essentially a feature of a ground state
solution $r$. It is generally a quite delicate issue to consider, based upon
nontrivial spectral estimates and the fact that the kernel of the linearized operator is
$N$-dimensional and spanned by the partial derivatives $\partial_j r$ of $r$.
Let us point out which is the current
knowledge of particular cases, within our framework, where this assumption is indeed satisfied.
For the Schr\"odinger equation with or without magnetic field, Property~\ref{stabPhi}
is satisfied, since the (unique) ground state solution of $-\frac{1}{2}\Delta r+r=r^{2p+1}$ is
non-degenerate and satisfies suitable spectral estimates (see the striking works of Weinstein~\cite{We1,We2}).
For systems, already in the case of two components, the situation is still very far from being
completely understood. On the other hand, very recently Dancer and Wei have proved in~\cite{dw} the existence
of non-degenerate ground state solutions in some particular cases, providing an important tool in connection
with Property~\ref{stabPhi}.
In the one dimensional case, Property~\ref{stabPhi} has been verified in~\cite{mmp3}
for two-components weakly coupled nonlinear Schr\"odinger system. The main obstacle in dealing
with the higher dimensional case is the smoothness of the energy functional ${\mathcal E}$ which is not of
class $C^2$ due to the presence of the coupling terms $\int |\phi^i|^{p+1}|\phi^j|^{p+1}$, being $p<2/N<1$.

\smallskip
\subsubsection{Statement of the result}
On the external potentials $V$ and $A$,
on the nonlocal term $\Phi$ and on the ground state solution $r$
of~\eqref{seMF} which is chosen to build the initial datum~\eqref{initialD},
we assume that they are admissible for the point particle dynamics in the sense indicated above
and that the following conditions hold:
\vskip6pt
\noindent

\noindent {\bf (V)} $V\in C^3(\R^N)$ is positive and
$\|V\|_{C^3}<\infty$\,;\\

\noindent {\bf (A)} $A\in C^3(\R^N;\R^N)$ with $\|A\|_{C^3}<\infty$;\\

\noindent {\bf ($\boldsymbol{\Phi}$)} $\Phi\in C^3(\R^N)$ positive with $\|\Phi\|_{C^3}<\infty$.

\vskip8pt
\noindent
We shall think $\Phi$ as a smooth function decaying at infinity as $|x|^{-\rho}$ for some $\rho>0$
(for instance, in $\R^N$ with $N\geq 3$, decaying as the Coulomb potential $|x|^{2-N}$)
having a maximum point at the origin.
\vskip8pt
\noindent
Under the previous assumptions, we can state the main result of this paper.

\begin{theorem}\label{th:main}
    Assume that $\Phi=0$ in the vectorial case $m>1$.
    Let $\phi_{\eps}$ be the family of solutions to problem~\eqref{problema} corresponding to the initial datum~\eqref{initialD}
    modelled on a ground state $r$ of~\eqref{seMF} and let $(x_\eps(t), \xi_\eps(t))$ be the solution
    of~\eqref{DriveS}. Then there exist $\delta>0$, $\eps_0>0$ and shift functions $\theta^1_{\eps}, \dots,
\theta^m_{\eps} :\R^+\to[0,2\pi)$ such that, if $\|A\|_{C^2}<\delta$, then
\begin{equation*}
    \phi_\eps^j(x,t)=e^{\iu(\xi_\eps(t)\cdot x+\theta^j_\eps(t)+A(\eps x_\eps(t))
    \cdot (x-x_\eps(t))}r_j(x-x_\eps(t))+\omega^j_\eps(x,t),
\end{equation*}
where $\|\omega^j_\eps(t)\|_{H^1}\leq {\mathcal O}(\eps)$,
for all $\eps\in(0,\eps_{0})$ and $j=1,\dots, m$, locally uniformly in time with the time scale $\eps^{-1}$.
Furthermore, without restrictions on $\|A\|_{C^2}$, there exists $\eps_0>0$ such that
\begin{equation}
	\label{secondconcl-A}
	|\phi_\eps^j(x,t)|=r_j(x-x_\eps(t))+\hat\omega_\eps^j(x,t),
\end{equation}
where $\|\hat\omega_\eps^j\|_{H^1}\leq {\mathcal O}(\eps)$,
for all $\eps\in(0,\eps_{0})$ and $j=1,\dots, m$, locally uniformly in time
with the time scale $\eps^{-1}$.
\end{theorem}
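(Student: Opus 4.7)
The plan is to extend the modulation/energy-convexity scheme initiated by Bronski--Jerrard and developed for the magnetic setting in~\cite{selvit,squa} to the present general problem, combining the two conservation laws of Property~\ref{wellP} (componentwise mass $\mathcal N_\eps^j$ and total energy $E_\eps$) with the conservation of the ODE Hamiltonian $\mathcal H$ in~\eqref{Hamilt} and the non-degeneracy/convexity bound of Property~\ref{stabPhi}. The radial symmetry of the components of $r$ (Definition~\ref{defadmissibilityy}) together with the $C^3$ regularity of $V$, $A$, $\Phi$ is what will let one push the Taylor expansion of $E_\eps$ to second order in $\eps$, yielding the sharp $O(\eps)$ rate in $H^1$ rather than the naive $O(\sqrt\eps)$.

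First I would evaluate $E_\eps(\phi_\eps(0))$ on the explicit initial datum~\eqref{initialD}. By the gauge covariance
\begin{equation*}
\Big(\tfrac{1}{\iu}\nabla-A(\eps x)\Big)(e^{\iu\psi}u)=e^{\iu\psi}\Big(\tfrac{1}{\iu}\nabla+\nabla\psi-A(\eps x)\Big)u,
\end{equation*}
the phase built in~\eqref{initialD} yields $\nabla\psi=A(\eps x_\eps(0))+\xi_0$, so after a Taylor expansion of $A(\eps x)$ around $x_0=\eps x_\eps(0)$, of $V(\eps x)$ around $x_0$ and of $\Phi(\eps(x-y))$ around $0$, and using that all first order corrections vanish because $\int y|r_j(y)|^2 dy=0$ by radial symmetry, one obtains
\begin{equation*}
E_\eps(\phi_\eps(0))=\mathcal E(r)+M\,\mathcal H(0)+O(\eps^2),
\end{equation*}
the nonlocal piece contributing precisely the constant $M\cdot\mathcal M$ in~\eqref{Hamilt}. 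Conservation of $E_\eps$ and of $\mathcal H$ then promotes this to
\begin{equation*}
E_\eps(\phi_\eps(t))=\mathcal E(r)+M\mathcal H(t)+O(\eps^2),\qquad t\ge0.
\end{equation*}

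Next I would introduce the modulated field
\begin{equation*}
U_\eps(x,t):=e^{-\iu[\xi_\eps(t)\cdot x+A(\eps x_\eps(t))\cdot(x-x_\eps(t))]}\phi_\eps(x,t),
\end{equation*}
which has the same componentwise modulus and $L^2$-norm as $\phi_\eps$. Running the previous expansion in reverse at time $t$ one checks that
\begin{equation*}
\mathcal E(U_\eps(t))-\mathcal E(r)\le O(\eps^2).
\end{equation*}
Since $\|U_\eps^j(t)\|_{L^2}=\|r_j\|_{L^2}$ by mass conservation and $\Gamma_{U_\eps(0)}=0$, a continuity/bootstrap argument propagates $\Gamma_{U_\eps(t)}<C'$ on intervals of length of order $\eps^{-1}$, so Property~\ref{stabPhi} applies and gives $\Gamma_{U_\eps(t)}\le O(\eps^2)$; in particular there exist $\theta^j_\eps(t)\in[0,2\pi)$ and $y_\eps(t)\in\R^N$ such that
\begin{equation*}
\|U_\eps(\cdot,t)-\big(e^{\iu\theta^1_\eps(t)}r_1(\cdot+y_\eps(t)),\dots,e^{\iu\theta^m_\eps(t)}r_m(\cdot+y_\eps(t))\big)\|_{H^1}=O(\eps).
\end{equation*}
The first conclusion of the theorem is then equivalent to showing $y_\eps(t)=-x_\eps(t)+o(1)$, which I would establish by Ehrenfest-type identities: time-differentiating $\int x|\phi_\eps^j|^2dx$ and the magnetic momentum $\Re\int\bar\phi_\eps^j(\tfrac{1}{\iu}\nabla-A(\eps x))\phi_\eps^j dx$, inserting the soliton ansatz, and comparing with~\eqref{DriveS}; the Lorentz contribution $\xi_\eps\times B(\eps x_\eps)$ comes out of the $A\cdot\nabla$ cross term in the magnetic energy, and the residual error reduces to $O(\eps)$ precisely when $\|A\|_{C^2}<\delta$, which is the role of $\delta$ in the statement. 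For the second, modulus-only conclusion~\eqref{secondconcl-A}, the diamagnetic inequality
\begin{equation*}
\int\Big|\tfrac{1}{\iu}\nabla\phi_\eps-A(\eps x)\phi_\eps\Big|^2dx\ge\int\big|\nabla|\phi_\eps|\big|^2dx
\end{equation*}
replaces the magnetic kinetic term in $E_\eps$ by an $A$-free one, so the same energy expansion applied directly to the real vector $|\phi_\eps|$ yields $\mathcal E(|\phi_\eps|)\le\mathcal E(r)+O(\eps^2)$; Property~\ref{stabPhi} (with all phases $\theta^j$ vanishing, only the translation to be matched) then delivers~\eqref{secondconcl-A} without any size restriction on $A$.

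The main obstacle is the lock-on of the modulation translation $y_\eps(t)$ to $-x_\eps(t)$ in the magnetic setting: the Lorentz law $\dot\xi_\eps=-\nabla V(\eps x_\eps)-\xi_\eps\times B(\eps x_\eps)$ has to emerge from the magnetic Ehrenfest identities for $\phi_\eps$, and the cross contributions of the form $\int A(\eps x)\cdot\Im(\bar\phi_\eps\nabla\phi_\eps)dx$ are controlled by $O(\eps)$ only after invoking the smallness of $\|A\|_{C^2}$. Technically delicate is also the second-order Taylor expansion of the nonlocal nonlinearity, where one uses $\|\Phi\|_{C^3}<\infty$ and the first moment bound $x_ir_j\in L^2$ to extract the constant $\Phi(0)$ from $\iint\Phi(\eps(x-y))|r_i(x-z)|^2|r_j(y-z)|^2 dx dy$ up to $O(\eps^2)$, matching the constant $\mathcal M$ of~\eqref{Hamilt}, and the whole argument must be set up so that the bootstrap on $\Gamma_{U_\eps(t)}<C'$ holds on the macroscopic time scale $\eps^{-1}$.
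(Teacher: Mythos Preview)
Your overall ingredients are right---the energy expansion at $t=0$, the modulated field, Property~\ref{stabPhi}, Ehrenfest-type identities, and a bootstrap---but there is a genuine gap in the order in which you deploy them.

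The claim ``running the previous expansion in reverse at time $t$ one checks that $\mathcal E(U_\eps(t))-\mathcal E(r)\le O(\eps^2)$'' is not justified. When you relate $E_\eps(\phi_\eps(t))$ to $\mathcal E(U_\eps(t))$ by undoing the phase, you pick up terms such as
\[
\int V(\eps x)|\phi_\eps(x,t)|^2\,dx-MV(\eps x_\eps(t)),\qquad
\int q_\eps^A(x,t)\,dx-M\xi_\eps(t),
\]
plus analogous pieces involving $A$, $|A|^2$ and (when $m=1$) the nonlocal term. These are \emph{not} $O(\eps^2)$ a priori: their smallness encodes precisely the concentration of mass and momentum at $(x_\eps(t),\xi_\eps(t))$, which is what you are trying to prove. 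At $t=0$ the expansion works only because $\phi_\eps(0)$ is explicitly built on $r(\cdot-x_\eps(0))$; for $t>0$ this information is unavailable, so Property~\ref{stabPhi} cannot be invoked directly.

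The paper handles this by packaging these deviations into an error functional $\Omega_\eps(t)$ (the $C^{2*}$-type quantities $\Pi^1_\eps$, $\Pi^2_\eps$, $\gamma_\eps$, $\rho_\eps^A$) and proving only $\mathcal E(\psi_\eps(t))-\mathcal E(r)\le C\,\Omega_\eps(t)+O(\eps^2)$ (Lemma~\ref{lemmaOmegaepsilon}). Property~\ref{stabPhi} then gives a representation formula with error $\sqrt{\Omega_\eps(t)}+O(\eps)$; feeding that into the $C^{2*}$ estimates (Lemmas~\ref{tec1}--\ref{lemmaT0+}) and the Ehrenfest identities (Proposition~\ref{identita}) yields $|\frac{d}{dt}\Omega_\eps|\le C\eps\,\Omega_\eps+O(\eps^3)$, and Gronwall closes the loop: $\Omega_\eps(t)\le C\eps^2$ on $[0,T_0/\eps]$. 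So the Ehrenfest identities are not a post-hoc device to align $y_\eps$ with $x_\eps$---they are what propagates the smallness of the energy excess in the first place. The condition $\|A\|_{C^2}<\delta$ enters because the term $\rho_\eps^A(t)=\big|\int\Pi_\eps^1\cdot A\big|$ cannot be absorbed into the Gronwall loop otherwise; without it only the modulus statement survives, via the diamagnetic inequality as you correctly sketch.
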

\vskip1pt

This kind of results has the origin in some works in linear geometric asymptotics which go back to the 70's
(see~\cite{GS}). We stress that, in the vectorial case $m>1$, we are not aware of any physically reasonable model including the
nonlocal coupling terms. Hence, for $m>1$, we consider systems of coupled Schr\"odinger equations with local terms,
which are being extensively studied in the literature of recent years.

\begin{remark}
Rescaling back to problem~\eqref{problematilde}, the
approximated representation formula reads as
\begin{equation*}
    \zeta_\eps^j(x,t)=e^{\frac{\iu}{\eps}(\xi(t)\cdot x+\vartheta^j_\eps(t)+A(x(t))
    \cdot (x-x(t))}r_j\Big(\frac{x-x(t)}{\eps}\Big)+\Xi^j_\eps(x,t),
\end{equation*}
locally uniformly in time, where we have set $\vartheta^j_\eps(t)=\eps\theta^j_\eps(t/\eps)$ and
$\Xi^j_\eps(x,t)=\omega^j_\eps(x/\eps,t/\eps)$, which reads as in~\cite{squa} and in the
previously cited papers in the particular cases $m=1$, $A=0$ and $\Phi=0$.
\end{remark}

\vskip25pt
\begin{center}\textbf{Plan of the paper.}\end{center}
In Section~\ref{preliminarysection}, we prove various preliminary Lemmas, particularly
focused on the asymptotic behaviour of the energy, for $\eps$ small.
In Section~\ref{masssection}, we prove some Lemmas, focused on the asymptotic
behaviour of the density and of the momentum associated with the solution, for $\eps$ small.
In Section~\ref{error-estimate}, we prove a result yielding a precise control
on the norm of the error function $\omega^j_\eps$ which appears in Theorem~\ref{th:main}.
Finally, in Section~\ref{proof-section}, we conclude the proof of the main result, Theorem~\ref{th:main}.

\vskip20pt
\begin{center}\textbf{Notations.}\end{center}
\begin{enumerate}
\item The imaginary unit is denoted by $\iu$.
\item The conjugate of any $z\in\C$ is denoted by $\bar z$, the real and imaginary parts by $\Re z$ and $\Im z$.
\item The symbol $\R^+$ means the positive real line $[0,\infty)$.
\item The ordinary inner product between two vectors $a,b\in\R^N$ is denoted by $a \cdot b$.
\item The standard $L^p$ norm, $1<p\leq\infty$ of a function $u$ is denoted by $\|u\|_{L^p}$.
\item The symbols $\partial_t$ and $\partial_j$ mean $\frac{\partial}{\partial t}$ and $\frac{\partial}{\partial x_j}$ respectively. $\Delta$ means $\frac{\partial^2}{\partial x_{1}^2}+\cdots+\frac{\partial^2}{\partial x_{N}^2}$.
\item The symbol $C^k(\R^N;\C^m)$, for $k\in\N$, denotes the space of functions with continuous derivatives up to
the order $k$. Sometimes $C^k(\R^N;\C^m)$ is endowed with the norm
$$
\|\phi\|_{C^k}=\sum_{|\alpha|\leq k}\|D^\alpha\phi\|_{L^\infty}<\infty.
$$
\item The symbol $\int f(x)dx$ stands for the integral of $f$ over $\R^N$ with the Lebesgue measure.
\item The symbol $C^{2*}$ denotes the dual space of $C^2$. The norm of a $\nu$ in $C^{2*}$ is
$$
\|\nu\|_{C^{2*}}=\sup\Big\{\big|\int \phi(\eps x)\nu dx\big|:\,\phi\in C^2(\R^N),\,\,\|\phi\|_{C^2}\leq 1\Big\}.
$$
Clearly, $C^{2*}$ contains the space of bounded Radon measures.
\item $C$ denotes a generic positive constant, which may vary inside a chain of inequalities.
\item ${\mathcal O}(\eps)$ is a generic function such that
the $\limsup$ of $\eps^{-1}{\mathcal O}(\eps)$ is finite, as $\eps\to 0$.
\end{enumerate}

\medskip
\medskip

\section{Some preliminary stuff}
\label{preliminarysection}
\noindent

Observe that, from Property~\ref{wellP}, due to the choice of
the initial datum~\eqref{initialD}, the masses ${\mathcal
N}_{\eps}^j(t)$ are also {\em independent} of $\eps$. Indeed, via the
mass conservation law, by the form of the initial datum and~\eqref{valueofm},
we have
\begin{equation}\label{eqmi}
{\mathcal N_{\eps}^j(t)}={\mathcal
N_{\eps}^j(0)}=\int|\phi_{\eps}^j(x,0)|^{2}dx=\int
\Big|r_j\Big(x-x_\eps(0)\Big)
\Big|^{2}dx=\|r_j\|^2_{L^2}=m_j,
\end{equation} for all $\eps>0$, $t\in\R^+$
and $j=1,\dots,m$.

\vskip4pt
\noindent
We now recall a useful identity (see e.g.~\cite[Lemma 3.3]{Keerani2}).

\begin{lemma}\label{pote}
Assume that $g:\R^N\to\R$ is a function of class $C^{2}(\R^N)$,
$\|g\|_{C^2}<\infty$, and that $r$ is a ground state solution
of~\eqref{seMF}. Then, as $\eps$ goes to zero, for any $i=1,\dots,m$ it holds
\begin{equation*}
\int g(\eps x+y)r^2_i(x)dx=\int g(y)r^2_i(x)dx+{\mathcal O}(\eps^2),
\end{equation*}
for every $y\in \R^N$.
\end{lemma}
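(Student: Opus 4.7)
The plan is to Taylor expand $g$ to second order about $y$ and integrate term by term against $r_i^2$. First I would write, by the Lagrange form of the remainder,
\begin{equation*}
g(\eps x + y) = g(y) + \eps\, \nabla g(y) \cdot x + \frac{\eps^2}{2}\bigl\langle H_g(\eta_{\eps,x,y})\, x, x\bigr\rangle,
\end{equation*}
where $H_g$ is the Hessian matrix of $g$ and $\eta_{\eps,x,y}$ lies on the segment joining $y$ and $y+\eps x$. Multiplying by $r_i^2(x)$ and integrating over $\R^N$ yields
\begin{equation*}
\int g(\eps x+y)\, r_i^2(x)\, dx = g(y)\int r_i^2(x)\, dx + \eps\, \nabla g(y)\cdot\!\int x\, r_i^2(x)\, dx + R_\eps(y),
\end{equation*}
with $R_\eps(y) = \tfrac{\eps^2}{2}\int \langle H_g(\eta_{\eps,x,y})x, x\rangle\, r_i^2(x)\, dx$.

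The second step is to kill the linear term by symmetry. By the admissibility hypothesis (Definition~\ref{defadmissibilityy}), the ground state component $r_i$ is radially symmetric, so $x\mapsto x_k\, r_i^2(x)$ is an odd function for every $k=1,\dots,N$; hence $\int x\, r_i^2(x)\, dx = 0$, and the first-order contribution vanishes identically in $y$.

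The third step is a uniform bound on $R_\eps(y)$. Since $\|g\|_{C^2}<\infty$, every entry of $H_g$ is bounded by $\|g\|_{C^2}$, so
\begin{equation*}
|R_\eps(y)| \leq \frac{\eps^2}{2}\, \|g\|_{C^2} \int |x|^2\, r_i^2(x)\, dx = \frac{\eps^2}{2}\, \|g\|_{C^2} \sum_{k=1}^N \|x_k r_i\|_{L^2}^2,
\end{equation*}
and the right-hand side is finite precisely because the admissibility condition requires $x_k r_i \in L^2(\R^N)$ for all $k$. This produces the $\mathcal{O}(\eps^2)$ remainder uniformly in $y\in\R^N$, which is exactly the claimed expansion.

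There is no real obstacle in this argument; the only essential observation is that radial symmetry of the ground state wipes out the first-order term, upgrading what would a priori be an $\mathcal{O}(\eps)$ error to $\mathcal{O}(\eps^2)$. All moment and derivative integrabilities needed are built into the admissibility string and the hypothesis $\|g\|_{C^2}<\infty$.
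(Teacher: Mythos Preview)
Your proof is correct and follows exactly the approach the paper uses (compare the proof of the companion Lemma~\ref{potePhi}, where the same Taylor expansion, radial-symmetry cancellation of the first-order term, and $C^2$-bound on the Hessian combined with $x_kr_i\in L^2$ are invoked). The paper itself does not reprove Lemma~\ref{pote} but refers to \cite[Lemma~3.3]{Keerani2}, whose argument is precisely what you wrote.
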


\vskip2pt
\noindent
In a similar fashion, we have the following counterpart to be used for the nonlocal term.

\begin{lemma}\label{potePhi}
Assume that $g:\R^N\to\R$ is a function of class $C^{2}(\R^N)$,
$\|g\|_{C^2}<\infty$, and that $r$ is a ground state solution
of~\eqref{seMF}. Then, as $\eps$ goes to zero, for any $i,j=1,\dots,m$ it holds
\begin{equation*}
\iint g(\eps (x-y))r^2_i(x)r_j^2(y)dxdy=m_im_j g(0)+{\mathcal O}(\eps^2).
\end{equation*}
\end{lemma}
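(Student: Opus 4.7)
The plan is to Taylor expand $g(\eps(x-y))$ about the origin to second order, integrate term by term against the product $r_i^2(x)r_j^2(y)$, and exploit the radial symmetry of the ground states to kill the linear term. The $L^2$-integrability of $|x|\,r_j$ guaranteed by the admissibility definition will control the quadratic remainder.

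More precisely, since $g\in C^2(\R^N)$ with $\|g\|_{C^2}<\infty$, Taylor's formula with integral remainder yields
\begin{equation*}
g(\eps z)=g(0)+\eps\,\nabla g(0)\cdot z+\eps^2 R(z,\eps),
\qquad |R(z,\eps)|\leq C\|g\|_{C^2}|z|^2,
\end{equation*}
for every $z\in\R^N$ and $\eps>0$. Choosing $z=x-y$ and integrating against $r_i^2(x)r_j^2(y)$, I would split the resulting double integral into three pieces. The zeroth-order piece gives $g(0)\,\|r_i\|_{L^2}^2\|r_j\|_{L^2}^2=m_i m_j g(0)$ by~\eqref{valueofm}.

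The first-order piece reads
\begin{equation*}
\eps\,\nabla g(0)\cdot\iint (x-y)\,r_i^2(x)r_j^2(y)\,dx\,dy
=\eps\,\nabla g(0)\cdot\Big(m_j\!\int x\,r_i^2(x)\,dx-m_i\!\int y\,r_j^2(y)\,dy\Big),
\end{equation*}
and here I would invoke the fact that the ground state $r$ is componentwise radially symmetric (part of the admissibility assumption in Definition~\ref{defadmissibilityy}), so that each of the two vectorial integrals vanishes. This is precisely the cancellation that upgrades the expected $\mathcal{O}(\eps)$ error into $\mathcal{O}(\eps^2)$, and mirrors the mechanism behind Lemma~\ref{pote}.

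Finally, for the remainder I would bound
\begin{equation*}
\Big|\eps^2\iint R(x-y,\eps)\,r_i^2(x)r_j^2(y)\,dx\,dy\Big|
\leq C\eps^2\|g\|_{C^2}\iint |x-y|^2\,r_i^2(x)r_j^2(y)\,dx\,dy,
\end{equation*}
and expand $|x-y|^2=|x|^2-2x\cdot y+|y|^2$; the cross term vanishes again by radial symmetry, while the two remaining contributions $m_j\int|x|^2 r_i^2\,dx+m_i\int|y|^2 r_j^2\,dy$ are finite thanks to the hypothesis $x_\ell r_k\in L^2(\R^N)$ included in the admissibility string. The main (mild) obstacle is simply to notice that, without radial symmetry, the linear term would only yield an $\mathcal{O}(\eps)$ bound; once this is observed the estimate is routine and parallels Lemma~\ref{pote}.
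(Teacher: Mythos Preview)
Your proposal is correct and follows essentially the same route as the paper's proof: Taylor expand $g(\eps(x-y))$ to second order, use radial symmetry of the $r_j$ to kill the first-order term, and bound the quadratic remainder via $\|g\|_{C^2}<\infty$ together with $x_\ell r_k\in L^2(\R^N)$. The only cosmetic difference is that the paper writes the Lagrange form of the remainder and bounds each $(x_h-y_h)(x_k-y_k)$ term directly, whereas you package the remainder as $|x-y|^2$ and expand; both arguments rest on the same hypotheses and are equivalent.
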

\begin{proof}
By Taylor expansion, for some point $\xi$ of the form $\xi=\eps\tau (x-y)$ with $\tau\in (0,1)$, we have
\begin{align*}
&\iint g(\eps (x-y))r^2_i(x)r_j^2(y)dxdy=  \\
&\qquad  =g(0) \iint r^2_i(x)r_j^2(y)dxdy
+\eps\sum_{h=1}^ND_hg(0) \cdot \iint (x_h-y_h) r^2_i(x)r_j^2(y)dxdy \\
&\qquad +\frac{\eps^2}{2} \sum_{h,k=1}^N \iint D^2_{hk}g(\xi) (x_h-y_h)(x_k-y_k)   r^2_i(x)r_j^2(y)dxdy \\
&\qquad  = m_i m_jg(0) +\eps\sum_{h=1}^ND_hg(0) \int x_h r^2_i(x)dx \int r_j^2(y)dy \\
&\qquad -\eps\sum_{h=1}^ND_hg(0) \int y_h r^2_j(y)dy \int r_i^2(x)dx  \\
&\qquad +\frac{\eps^2}{2} \sum_{h,k=1}^N \iint D^2_{hk}g(\xi) (x_h-y_h)(x_k-y_k)  r^2_i(x)r_j^2(y)dxdy \\
&\qquad =  m_i m_jg(0) +\frac{\eps^2}{2} \sum_{h,k=1}^N \iint D^2_{hk}g(\xi)(x_h-y_h)(x_k-y_k)  r^2_i(x)r_j^2(y)dxdy \\
&\qquad =  m_i m_jg(0) +{\mathcal O}(\eps^2).
\end{align*}
In the above computations we used the fact that $|D^2_{hk}g(\xi)|\leq \|g\|_{C^2}<\infty$,
that, since $r_i$ is radially symmetric, $\int z_h r^2_i(z)dz=0$ and, finally, that
$z_h r_i\in L^2(\R^N)$ for any $h$ and $i$ (cf.~Definition~\ref{defadmissibilityy}).
\end{proof}

\vskip4pt
\noindent
In the next result we obtain an asymptotic formula for the energy, linking the functionals
$E_\eps$, ${\mathcal E}$ and ${\mathcal H}$, up to an error ${\mathcal O}(\eps^2)$ (see also~\cite{squa}).

\begin{lemma}
    \label{estUNO}
For every $t\in\R^+$, as $\eps$ goes to zero, it holds
$$
E_\eps(t)={\mathcal E}(r)+M{\mathcal H}(t)+{\mathcal O}(\eps^2).
$$
\end{lemma}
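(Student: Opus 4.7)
The plan is to exploit the two conservation laws available: $E_\eps(t)\equiv E_\eps(0)$ from Property~\ref{wellP}, and $\mathcal{H}(t)\equiv\mathcal{H}(0)$ from~\eqref{DriveS}, so that it suffices to prove the identity at $t=0$, where the solution is given explicitly by the initial datum~\eqref{initialD}. I would then evaluate each of the five contributions to $E_\eps(0)$ separately, using Lemma~\ref{pote} for the local pieces (kinetic-magnetic and electric) and Lemma~\ref{potePhi} for the Hartree-type pieces.

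For the kinetic-magnetic term, denote the phase of $\phi_0^j$ by $S(x)=A(\eps x_\eps(0))\cdot(x-x_\eps(0))+x\cdot\xi_\eps(0)$. A direct computation gives
\[
\tfrac{1}{\iu}\nabla\phi_0^j-A(\eps x)\phi_0^j=e^{\iu S}\Bigl[\tfrac{1}{\iu}\nabla r_j(\cdot-x_\eps(0))+r_j(\cdot-x_\eps(0))\bigl(A(\eps x_\eps(0))+\xi_\eps(0)-A(\eps x)\bigr)\Bigr].
\]
The cross-term in $|\,\cdot\,|^2$ is purely imaginary, since $r_j$ and $\nabla r_j$ are real while it is multiplied by $1/\iu$ times a real vector; hence it disappears upon taking the real part. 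After translating $y=x-x_\eps(0)$ and applying Lemma~\ref{pote} to the function $g(z)=|A(\eps x_\eps(0))+\xi_\eps(0)-A(z)|^2$ (whose $C^2$-norm is uniformly controlled through $\|A\|_{C^3}$), the magnetic gradient integrates to $\tfrac{1}{2}\sum_j\|\nabla r_j\|_{L^2}^2+\tfrac{|\xi_\eps(0)|^2}{2}M+\mathcal{O}(\eps^2)$.

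For the electric term, the phase drops out of $|\phi_0^j|^2=r_j^2(\cdot-x_\eps(0))$, and Lemma~\ref{pote} applied to $V$ yields
\[
\int V(\eps x)|\phi_0(x)|^2dx=V(\eps x_\eps(0))\,M+\mathcal{O}(\eps^2).
\]
The purely local nonlinear terms are translation-invariant and, again by cancellation of the phase, reduce exactly to
\[
-\sum_j\tfrac{\alpha_j}{p+1}\!\int r_j^{2p+2}-\sum_{i\neq j}\tfrac{\gamma_{ij}}{p+1}\!\int r_i^{p+1}r_j^{p+1},
\]
which together with $\tfrac{1}{2}\sum_j\|\nabla r_j\|_{L^2}^2$ reconstructs $\mathcal{E}(r)$. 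For the Hartree pieces, translating both variables and applying Lemma~\ref{potePhi} to $g=\Phi$ yields
\[
-\tfrac{\Phi(0)}{2}\Bigl\{\sum_j\beta_j m_j^2+\sum_{i\neq j}\omega_{ij}m_im_j\Bigr\}+\mathcal{O}(\eps^2)=M\mathcal{M}+\mathcal{O}(\eps^2).
\]

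Summing the four blocks gives $E_\eps(0)=\mathcal{E}(r)+M\bigl[\tfrac{1}{2}|\xi_\eps(0)|^2+V(\eps x_\eps(0))+\mathcal{M}\bigr]+\mathcal{O}(\eps^2)=\mathcal{E}(r)+M\mathcal{H}(0)+\mathcal{O}(\eps^2)$, and the two conservation laws extend the equality to every $t\in\R^+$. The only slightly delicate point is the kinetic-magnetic expansion: one must (i) see that the oscillating phase $e^{\iu S}$ cancels in modulus, (ii) track that the remaining cross-term vanishes by reality of $r_j$, and (iii) verify that the auxiliary function $g$ to which Lemma~\ref{pote} is applied has a $C^2$-norm bounded uniformly in $\eps$, an immediate consequence of hypothesis~\textbf{(A)}. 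Everything else is a direct combination of the two expansion lemmas with a translation of variables.
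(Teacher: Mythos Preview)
Your argument is correct and follows essentially the same route as the paper: reduce to $t=0$ via the two conservation laws, expand the magnetic-kinetic term using the reality of $r_j$ to kill the cross-term, and then invoke Lemma~\ref{pote} for the local pieces and Lemma~\ref{potePhi} for the Hartree pieces. The only cosmetic difference is that the paper applies Lemma~\ref{pote} separately to $|A(\eps x+x_0)|^2$ and $A(\eps x+x_0)\cdot(A(x_0)+\xi_0)$, whereas you bundle these into the single auxiliary function $g(z)=|A(x_0)+\xi_0-A(z)|^2$; either way yields $m_j|\xi_0|^2+\mathcal{O}(\eps^2)$.
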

\begin{proof}
Taking into account that, in view of Lemma~\ref{pote}, for all
$j=1,\dots, m$ we have
\begin{align*}
& \int r^2_j(x)|A(\eps x+x_0)|^2dx=|A(x_0)|^2m_j+{\mathcal O}(\eps^2),   \\
& \int r^2_j(x)A(\eps x+x_0)\cdot (A(x_0)+\xi_0)dx=A(x_0)\cdot
(A(x_0)+\xi_0)m_j+{\mathcal O}(\eps^2),
\end{align*}
as $\eps$ goes to zero, it is readily checked that, for any
$j=1,\dots,m$, we get
\begin{equation*}
 \int\Big|\Big(\frac{\nabla}{\iu}-A(\eps x)\Big)
\Big(r_j\Big(x-x_\eps(0)\Big)e^{\iu[A(x_0)\cdot(x-x_\eps(0))+x\cdot\xi_0]}\Big)\Big|^2dx
=\int\left|\nabla r_j(x)\right|^2dx +m_j|\xi_0|^2+{\mathcal
O}(\eps^2).
\end{equation*}
In turn, by combining the conservation of energy (see
Property~\ref{wellP}) and the conservation of the function
${\mathcal H}$ (see definition~\eqref{Hamilt}), taking into account
Lemma~\ref{pote} and Lemma~\ref{potePhi}, as $\eps$ goes to zero, we get
$$\begin{aligned}
E_\eps(t)&=E_\eps(0)
=E_\eps\Big(r(x-x_\eps(0))e^{\iu[A(x_0)\cdot(x-x_\eps(0))
+x\cdot\xi_0]}\Big) \\
&=\frac{1}{2}\sum_{j=1}^m\int\Big|\Big(\frac{\nabla}{\iu}-A(\eps
x)\Big)
\Big(r_j\Big(x-x_\eps(0)\Big)e^{\iu[A(x_0)\cdot(x-x_\eps(0))+x\cdot\xi_0]}\Big)\Big|^2dx \\
& \quad +\sum_{j=1}^m\int  V(x_0+\eps x)r^{2}_j(x)dx
-\sum_{j=1}^m\frac{\alpha_j}{p+1}\int |r_j|^{2p+2}dx
-\!\!\!\sum_{i,j=1,\,i\neq j}^m\frac{\gamma_{ij}}{p+1}\int |r_i|^{p+1}|r_j|^{p+1}dx \\
& \quad -\sum_{j=1}^m\frac{\beta_j}{2} \iint \Phi(\eps(x-y))|r_j(x)|^2|r_j(y)|^2
dx dy
 -\!\!\sum_{i,j,\,i\neq j}^m\frac{\omega_{ij}}{2} \iint \Phi(\eps(x-y))|r_i(x)|^2|r_j(y)|^2 dx dy \\
&={\mathcal E}(r)+\sum_{j=1}^m \int  V(x_0+\eps x)r^{2}_j(x)dx
+\frac{1}{2}\sum_{j=1}^m m_j|\xi_0|^2+M{\mathcal M}+{\mathcal O}(\eps^2) \\
& ={\mathcal E}(r)+\sum_{j=1}^m m_jV(x_0)+\frac{1}{2}\sum_{j=1}^m m_j|\xi_0|^2+M{\mathcal M}+{\mathcal O}(\eps^2)
 ={\mathcal E}(r)+M{\mathcal H}(t)+{\mathcal O}(\eps^2).
\end{aligned}$$
\end{proof}

\noindent
The function $p^A_\eps:\R^N\times\R^+\to\R^{m+N}$ is the
(magnetic) momentum of $\phi_\eps$, defined as
\begin{equation}
    \label{paepsilon}
p^A_\eps(x,t):=\im\big(\bar\phi_\eps(x,t)(
\nabla\phi_\eps(x,t)-\iu A(\eps x)\phi_\eps(x,t))\big),\quad
x\in\R^N,\, t\in\R^+.
\end{equation}
Then, we have the following

\begin{lemma}
    \label{normBB}
Let $\phi_\eps$ be the solution to problem~\eqref{problema}
corresponding to the initial datum~\eqref{initialD}. Then there
exists a positive constant $C$ such that
$$
\big\|\iu^{-1}\nabla\phi_\eps(\cdot,t)-A(\eps
x)\phi_{\eps}(\cdot,t)\big\|^{2}_{L^2}\leq C,
$$
for all $t\in\R^+$ and any $\eps\in (0,1]$. In particular,
$$
\sup\limits_{t\in\R^+}\big|\int
p^A_\eps(x,t)dx\big|<\infty.
$$
\end{lemma}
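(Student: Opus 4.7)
The plan is to use conservation of energy together with Lemma~\ref{estUNO} to control the magnetic kinetic term appearing in $E_\eps(t)$, after absorbing the focusing nonlinear terms via a Gagliardo--Nirenberg estimate that exploits the subcriticality $p<2/N$ and the mass conservation law~\eqref{eqmi}.

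First, by Property~\ref{wellP} we have $E_\eps(t)=E_\eps(0)$ for every $t\in\R^+$, and Lemma~\ref{estUNO} applied at $t=0$ gives $E_\eps(0)={\mathcal E}(r)+M{\mathcal H}(0)+{\mathcal O}(\eps^2)$; in particular there exists $C_0>0$ such that $E_\eps(t)\leq C_0$ uniformly for $\eps\in(0,1]$ and $t\in\R^+$. Writing $D_\eps\phi:=\iu^{-1}\nabla\phi-A(\eps x)\phi$, I would rearrange the definition of $E_\eps(t)$ as
\begin{equation*}
\tfrac{1}{2}\|D_\eps\phi_\eps(t)\|_{L^2}^2=E_\eps(t)-\int V(\eps x)|\phi_\eps|^2dx+\mathcal R_\eps(t),
\end{equation*}
where $\mathcal R_\eps(t)$ collects the (nonnegative) focusing contributions from the local power nonlinearities and the nonlocal convolution terms. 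Since $V\geq 0$ by assumption {\bf (V)}, the middle term drops in our favor, and everything reduces to a uniform upper bound on $\mathcal R_\eps(t)$.

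For the nonlocal pieces the bound is immediate: since $\|\Phi\|_{L^\infty}<\infty$ and the masses are conserved (so $\|\phi_\eps^i\|_{L^2}^2=m_i$), each term of the form $\iint\Phi(\eps(x-y))|\phi_\eps^i(x)|^2|\phi_\eps^j(y)|^2dxdy$ is bounded by $\|\Phi\|_{L^\infty}m_im_j$, uniformly in $\eps$ and $t$. For the local terms $\int|\phi_\eps^i|^{p+1}|\phi_\eps^j|^{p+1}dx$ (including the case $i=j$), Hölder reduces matters to $\int|\phi_\eps^k|^{2p+2}dx$, which by the Gagliardo--Nirenberg inequality satisfies
\begin{equation*}
\int|\phi_\eps^k|^{2p+2}dx\leq C\,\|\nabla|\phi_\eps^k|\|_{L^2}^{Np}\|\phi_\eps^k\|_{L^2}^{2p+2-Np}
\leq C\,m_k^{(2p+2-Np)/2}\,\|D_\eps\phi_\eps\|_{L^2}^{Np},
\end{equation*}
where in the last step I use the diamagnetic inequality $|\nabla|\phi||\leq|D_\eps\phi|$ (pointwise a.e.). Since $Np<2$, Young's inequality allows me to write $\|D_\eps\phi_\eps\|_{L^2}^{Np}\leq\eta\|D_\eps\phi_\eps\|_{L^2}^2+C_\eta$ with $\eta>0$ as small as desired. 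Choosing $\eta$ small enough so that the coefficient of $\|D_\eps\phi_\eps\|_{L^2}^2$ on the right-hand side is strictly less than $1/2$, I can absorb that term into the left-hand side and obtain $\|D_\eps\phi_\eps(t)\|_{L^2}^2\leq C$, uniformly in $t\in\R^+$ and $\eps\in(0,1]$, which is the first assertion.

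The momentum bound is then a direct application of the Cauchy--Schwarz inequality: since $\nabla\phi_\eps-\iu A(\eps x)\phi_\eps=\iu D_\eps\phi_\eps$, the integrand in~\eqref{paepsilon} satisfies $|p^A_\eps(x,t)|\leq|\phi_\eps(x,t)|\,|D_\eps\phi_\eps(x,t)|$, whence
\begin{equation*}
\Big|\int p^A_\eps(x,t)dx\Big|\leq\|\phi_\eps(t)\|_{L^2}\,\|D_\eps\phi_\eps(t)\|_{L^2}\leq\sqrt{M}\cdot\sqrt{C},
\end{equation*}
by~\eqref{eqmi} and the first assertion, uniformly in $t$. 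The main delicate point is the Gagliardo--Nirenberg/diamagnetic absorption step: one needs strict subcriticality $Np<2$ for Young's inequality to produce a subquadratic power of $\|D_\eps\phi_\eps\|_{L^2}$, and one needs the diamagnetic inequality to convert the ordinary gradient in Gagliardo--Nirenberg into the magnetic one controlled by the energy.
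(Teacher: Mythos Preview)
Your proof is correct and follows essentially the same approach as the paper: energy conservation plus Lemma~\ref{estUNO} to bound $E_\eps$, dropping the potential term by positivity of $V$, bounding the nonlocal terms via $\|\Phi\|_{L^\infty}$ and mass conservation, and controlling the local power terms through the diamagnetic and Gagliardo--Nirenberg inequalities together with $Np<2$. The only cosmetic difference is that the paper concludes from $\Upsilon^2\leq C+C\Upsilon^{Np}$ by a contradiction argument, whereas you use Young's inequality to absorb the subquadratic term directly; these are equivalent.
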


\begin{proof}
By Property~\ref{wellP} the total energy $E_\eps$ is conserved
and, in addition, can be bounded independently of $\eps$ (due to
the choice of initial datum, see Lemma~\ref{estUNO}). Taking into
account the positivity of $V$ and the definition of $E_\eps$, it
follows that there exists a positive constant $C$ such that
\begin{equation}\label{norma22}
\begin{aligned}
& \Big\|\frac{1}{\iu}\nabla\phi_\eps(\cdot,t)-A(\eps
x)\phi_{\eps}(\cdot,t)\Big\|^{2}_{L^2}  =\int \Big|
\frac{1}{\iu}\nabla\phi_\eps(x,t)-A(\eps
x)\phi_{\eps}(x,t)\Big|^2 dx\\
& = 2E_\eps(t)-2\int V(\eps x)|\phi_{\eps}(x,t)|^{2}dx
+\frac{2}{p+1}\sum_{j=1}^m\alpha_j\int |\phi^j_\eps(x,t)|^{2p+2}dx \\
& \qquad +\frac{2}{p+1}\sum_{i,j,\,i\neq j}^m \gamma_{ij}\int |\phi^i_\eps(x,t)|^{p+1}|\phi^j_\eps(x,t)|^{p+1}dx \\
& \qquad +\sum_{j=1}^m\beta_j \iint \Phi(\eps(x-y))|\phi^j_\eps(x,t)|^2|\phi^j_\eps(y,t)|^2 dx dy \\
& \qquad +\sum_{i,j,\,i\neq j}^m\omega_{ij} \iint \Phi(\eps(x-y))|\phi^i_\eps(x,t)|^2|\phi^j_\eps(y,t)|^2 dx dy \\
& \leq C+\frac{2}{p+1}\sum_{j=1}^m\alpha_j\int
|\phi_{\eps}^j(x,t)|^{2p+2}dx
 +\frac{2}{p+1}\sum_{i,j,\,i\neq j}^m \gamma_{ij}\int |\phi_{\eps}^i(x,t)|^{p+1}|\phi_{\eps}^j(x,t)|^{p+1}dx \\
& \qquad +\sum_{j=1}^m\beta_j \iint \Phi(\eps(x-y))|\phi_{\eps}^j(x)|^2|\phi_{\eps}^j(y)|^2 dx dy \\
& \qquad +\sum_{i,j,\,i\neq j}^m\omega_{ij} \iint
\Phi(\eps(x-y))|\phi_{\eps}^i(x,t)|^2|\phi_{\eps}^j(y,t)|^2 dx dy.
\end{aligned}
\end{equation}
By combining the diamagnetic inequality (see e.g.~\cite{EL} for a proof)
$$
|\nabla |\phi_\eps^j||\leq \Big|\Big(\frac{\nabla}{\iu}-A(\eps
x)\Big)\phi_\eps^j\Big|,\qquad \text{a.e.\ in $\R^N$}
$$
with the Gagliardo-Nirenberg inequality, setting
$\vartheta=\frac{pN}{2p+2}\in (0,1)$, we obtain
$$
\|\phi_\eps^j(\cdot,t)\|_{L^{2p+2}}\leq
\|\phi_\eps^j(\cdot,t)\|_{L^{2}}^{1-\vartheta} \|\nabla
|\phi_\eps^j(\cdot,t)|\|^{\vartheta}_{L^{2}}\leq
\|\phi_\eps^j(\cdot,t)\|_{L^{2}}^{1-\vartheta}
\Big\|\Big(\frac{\nabla}{\iu}-A(\eps
x)\Big)\phi_\eps^j(\cdot,t)\Big\|^{\vartheta}_{L^{2}}
$$
for any $j=1,\dots,m$. While, by the conservation of mass, we
deduce that $$\|\phi_\eps^j(\cdot,t)\|_{L^2}^2 ={\mathcal
N}_\eps^j(t)=m_j\qquad j=1,\dots,m,$$ independently of $\eps$ (see
formula~\eqref{eqmi}). Hence, for all $\eps>0$, we get
\begin{equation}\label{diamagnetica}
\|\phi_\eps^j(\cdot,t)\|_{L^{2p+2}}^{2p+2} \leq
m_j^{(1-\theta)(p+1)}\Big\|\frac{1}{\iu}\nabla\phi_\eps^j(\cdot,t)-A(\eps
x)\phi_\eps^j(\cdot,t)\Big\|^{pN}_{L^{2}}\leq
C(\Upsilon_\eps(t))^{pN},
\end{equation}
for any $j=1,\dots,m$ and for some positive constant $C$, where we
have set, for $t>0$,
$$
\Upsilon_\eps(t)=\max_{j=1,\dots,m}\Upsilon^j_\eps(t),\qquad
\Upsilon^j_\eps(t)=\big\|\frac{1}{\iu}\nabla\phi_\eps^j(\cdot,t)-A(\eps
x)\phi_\eps^j(\cdot,t)\big\|_{L^{2}}.
$$
Observe also that, as $\Phi$ is uniformly bounded, for any $i,j=1,\dots,m$ we have
\begin{equation*}
 \iint \Phi(\eps(x-y))|\phi_\eps^i(x,t)|^2|\phi_\eps^j(y,t)|^2 dxdy
 \leq C \int |\phi_\eps^i(x,t)|^2dx \int |\phi_\eps^j(y,t)|^2 dy=C m_im_j.
\end{equation*}
Finally, notice that, by Young inequality
\begin{equation}
    \label{youngban}
\int |\phi_\eps^i(x,t)|^{p+1}|\phi_\eps^j(x,t)|^{p+1}dx\leq
\frac{1}{2}\|\phi_\eps^i(\cdot,t)\|^{2p+2}_{L^{2p+2}}
+\frac{1}{2}\|\phi_\eps^j(\cdot,t)\|^{2p+2}_{L^{2p+2}},
\end{equation}
for any $j=1,\dots,m$.
Putting now together all the previous inequalities from
\eqref{norma22} to~\eqref{youngban},
we finally obtain $(\Upsilon_\eps(t))^2\leq C+C(\Upsilon_\eps(t))^{pN}$
for $t>0$.
Taking into account that $pN<2$ by the assumption on $p$, if
$\Upsilon_\eps(t)$ was unbounded with respect to $t$ or $\eps$, the above
inequality would yield a contradiction. Hence $\Upsilon_\eps$ is
uniformly bounded with respect to $t$ and $\eps$, so that
the first assertion of Lemma~\ref{normBB} holds.
In order to prove the final assertion observe that, taking into
account the mass conservation law, by
H\"older inequality we get
\begin{equation*}
\left|\int p^A_\eps(x,t)dx\right| \leq \int |p^A_\eps(x,t)|dx\leq
\|\phi_\eps(\cdot,t)\|_{L^2}\Big\|\frac{1}{\iu}
\nabla\phi_\eps(\cdot,t)-A(\eps
x)\phi_\eps(\cdot,t)\Big\|_{L^2}\leq C,
\end{equation*}
for all $t\in\R^+$. The assertion follows by taking the supremum
over $t$ in $\R^+$.
\end{proof}

\noindent
For the next lemma we need to introduce the  total magnetic momentum $q^A_\eps$ defined as
$$
q^A_\eps(x,t)=\sum_{j=1}^m (p_\eps^A)^j(x,t), \qquad x\in\R^N, \quad t>0.
$$
Then, on a suitable function $\psi_\eps$ (related to the solution $\phi_\eps$), we have the following
\begin{lemma}
    \label{estDUE}
Let $\phi_{\eps}$ be the family of solutions to problem~\eqref{problema}
corresponding to the initial datum~\eqref{initialD}.
Let us set, for any $\eps>0$, $t\in\R^+$ and $x\in\R^N$
\begin{equation}
\label{veps} \psi_{\eps}^j(x,t) =e^{-{\rm\iu}\xi_\eps(t)\cdot [x+x_\eps(t)]} e^{-\iu A(\eps
x_\eps(t))\cdot x} \,\phi_{\eps}^j\left(x+x_\eps(t), t\right),\qquad j=1,\dots,m,
\end{equation}
where $(x_\eps(t), \xi_\eps(t))$ is the solution of system~\eqref{DriveS}.
Then, as $\eps$ goes to zero,
\begin{align*}
\mathcal E(\psi_{\eps}(t)) -{\mathcal E}(r) &=M{\mathcal H}(t)
-\int V(\eps x)|\phi_{\eps}(x,t)|^{2}dx
+\frac12 M|\xi_\eps(t)+A(\eps x_\eps(t))|^2 \\
& -(\xi_\eps(t)+A(\eps x_\eps(t))\cdot \int q_{\eps}^A(x,t)dx
-(\xi_\eps(t)+A(\eps x_\eps(t))\cdot
\int A(\eps x)|\phi_\eps(x,t)|^2dx\\
& +\frac{1}{2}\int |A(\eps x)|^2|\phi_\eps(x,t)|^2dx +\int A(\eps
x)\cdot q_\eps^A(x,t)dx  \\
& +\frac{1}{2}\sum_{j=1}^m\beta_j \iint \Phi(\eps (x-y))|\phi_\eps^j(x,t)|^2|\phi^j_\eps(y,t)|^2 dx dy \\
& +\frac{1}{2}\sum_{i,j,\,i\neq j}^m\omega_{ij} \iint
\Phi(\eps(x-y))|\phi_\eps^i(x,t)|^2|\phi^j_\eps(y,t)|^2 dx dy+{\mathcal O}(\eps^2).
\end{align*}
\end{lemma}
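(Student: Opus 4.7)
The plan is to compute $\mathcal{E}(\psi_\eps(t))$ directly from the definition of $\mathcal{E}$, exploit the gauge/translation invariance of the local nonlinearities, recognize in the leftover the magnetic energy of $\phi_\eps$, and then feed in the conservation identity from Lemma~\ref{estUNO}. The nonlinear part of $\mathcal{E}$ is trivial to handle because $|\psi_\eps^j(x,t)|=|\phi_\eps^j(x+x_\eps(t),t)|$, so after the change of variables $y=x+x_\eps(t)$ one obtains exactly $\sum_j\frac{\alpha_j}{p+1}\int|\phi_\eps^j(y,t)|^{2p+2}dy$ and the analogous coupling terms. All the work sits in the gradient term.

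For the gradient, write $\psi_\eps^j=U\phi_\eps^j(\cdot+x_\eps(t),t)$ with the unit phase $U=e^{-\iu\xi_\eps(t)\cdot(x+x_\eps(t))}e^{-\iu A(\eps x_\eps(t))\cdot x}$. Setting $\alpha:=\xi_\eps(t)+A(\eps x_\eps(t))$, the chain rule gives $\nabla\psi_\eps^j = U\bigl(-\iu\alpha\phi_\eps^j(\cdot+x_\eps)+\nabla\phi_\eps^j(\cdot+x_\eps)\bigr)$, hence
\begin{equation*}
|\nabla\psi_\eps^j|^2=|\nabla\phi_\eps^j(\cdot+x_\eps)|^2+|\alpha|^2|\phi_\eps^j(\cdot+x_\eps)|^2-2\alpha\cdot\im\bigl(\overline{\phi_\eps^j}\,\nabla\phi_\eps^j\bigr)(\cdot+x_\eps).
\end{equation*}
Translating the integration variable and using $\im(\overline{\phi_\eps^j}\nabla\phi_\eps^j)(y,t)=(p_\eps^A)^j(y,t)+A(\eps y)|\phi_\eps^j(y,t)|^2$ together with $\|\phi_\eps^j(\cdot,t)\|_{L^2}^2=m_j$ from \eqref{eqmi}, summation over $j$ produces
\begin{equation*}
\tfrac12\sum_j\!\int\!|\nabla\psi_\eps^j|^2dx=\tfrac12\sum_j\!\int\!|\nabla\phi_\eps^j|^2dy+\tfrac12M|\alpha|^2-\alpha\cdot\!\int q_\eps^A dy-\alpha\cdot\!\int\! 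A(\eps y)|\phi_\eps|^2dy.
\end{equation*}

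Next I would convert $\tfrac12\sum_j\int|\nabla\phi_\eps^j|^2$ into the magnetic kinetic term appearing in $E_\eps$. An identical expansion of $|\iu^{-1}\nabla\phi_\eps-A(\eps x)\phi_\eps|^2$ plus the same identity for $\im(\overline{\phi_\eps^j}\nabla\phi_\eps^j)$ gives
\begin{equation*}
\tfrac12\!\sum_j\!\int\!|\nabla\phi_\eps^j|^2=\tfrac12\!\int\!\bigl|\iu^{-1}\nabla\phi_\eps-A(\eps x)\phi_\eps\bigr|^2dx+\tfrac12\!\int|A(\eps x)|^2|\phi_\eps|^2dx+\!\int\! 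A(\eps x)\cdot q_\eps^A(x,t)dx,
\end{equation*}
and the right-hand side, combined with the local and nonlocal nonlinearities of $\mathcal{E}(\psi_\eps)$ (which coincide with those of $E_\eps(t)$ up to the $V$ term and the nonlocal terms), reassembles into $E_\eps(t)-\int V(\eps x)|\phi_\eps|^2dx+\tfrac12\sum_j\beta_j\iint\Phi(\eps(x-y))|\phi_\eps^j(x,t)|^2|\phi_\eps^j(y,t)|^2dxdy+\tfrac12\sum_{i\neq j}\omega_{ij}\iint\Phi(\eps(x-y))|\phi_\eps^i(x,t)|^2|\phi_\eps^j(y,t)|^2dxdy$.

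Finally I would invoke Lemma~\ref{estUNO} (conservation of $E_\eps$ and its asymptotic value), so that $E_\eps(t)=\mathcal{E}(r)+M\mathcal{H}(t)+\mathcal{O}(\eps^2)$. Subtracting $\mathcal{E}(r)$ on both sides yields exactly the displayed formula. The only delicate step is the gradient computation: one must track the signs in the cross term $-2\alpha\cdot\im(\bar\phi\nabla\phi)$ and correctly reconcile the two distinct expansions (for $\nabla\psi_\eps^j$ and for $\iu^{-1}\nabla\phi_\eps-A(\eps x)\phi_\eps$) so that the $A$-dependent corrections produced by the gauge transformation combine with those intrinsic to the magnetic energy. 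All other ingredients — gauge invariance of the local nonlinearities, the translation of the nonlocal double integrals, the mass identity \eqref{eqmi}, and the $L^2$ uniform bounds from Lemma~\ref{normBB} (needed to ensure that every integral manipulated above is finite) — are routine.
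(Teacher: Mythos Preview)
Your proposal is correct and follows essentially the same route as the paper: expand $|\nabla\psi_\eps^j|^2$ via the phase factor, translate to recover $\tfrac12\int|\nabla\phi_\eps|^2$ plus the $\alpha$-corrections, rewrite $\tfrac12\int|\nabla\phi_\eps|^2$ in terms of the magnetic kinetic energy, recognize $E_\eps(t)$, and invoke Lemma~\ref{estUNO}. The only cosmetic difference is that the paper carries the free momenta $p_\eps^j=\im(\bar\phi_\eps^j\nabla\phi_\eps^j)$ through the computation and substitutes $p_\eps^j=(p_\eps^A)^j+A(\eps x)|\phi_\eps^j|^2$ at the very end, whereas you make that substitution immediately; the algebra and the resulting formula are identical.
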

\begin{proof}
By a change of variable we see that
$\|\psi_{\eps}^j(t)\|_{L^2}^{2}=m_j$ for $j=1,\dots,m$. Hence the
mass of $\psi_{\eps}(t)$ is conserved through the motion. Let
$p_\eps^j(x,t)=\im\big(\bar\phi_\eps^j(x,t)\nabla\phi_\eps^j(x,t)\big)$
for $x\in\R^N$, $t\in\R^+$ and $j=1,\dots,m$ be the $j$-th
magnetic-free momentum. A direct computation yields
\begin{align*}
&\mathcal E(\psi_{\eps}(t))=\frac1{2}\int |\nabla
\phi_{\eps}(x,t)|^{2}dx +
\frac12\sum_{j=1}^m m_j|\xi_\eps(t)+A(\eps x_\eps(t))|^2    \\
& \qquad -\sum_{j=1}^m(\xi_\eps(t)+A(\eps x_\eps(t))\cdot \int p_{\eps}^j(x,t)dx -
\frac{1}{p+1}\sum_{j=1}^m\alpha_j\int |\phi_{\eps}^j(x,t)|^{2p+2}dx\\
& \qquad -\frac{1}{p+1}\sum_{i,j,\,\, i\neq j}^m\gamma_{ij}
\int |\phi_{\eps}^i(x,t)|^{p+1}|\phi_{\eps}^j(x,t)|^{p+1}dx,
\end{align*}
so that we obtain
\begin{align*}
& \mathcal
E(\psi_{\eps}(t))=\frac1{2}\int\left|\frac{1}{\iu}\nabla
\phi_{\eps}(x,t)-A(\eps x)\phi_\eps(x,t)\right|^{2}dx \\
& \qquad -\frac{1}{2}\int |A(\eps x)|^2|\phi_\eps(x,t)|^2 dx
+\sum_{j=1}^m\int A(\eps x)\cdot p_\eps^j(x,t)dx \\
& \qquad
+\frac1 2 \sum_{j=1}^m m_j|\xi_\eps(t)+A(\eps x_\eps(t))|^2
-\sum_{j=1}^m(\xi_\eps(t)+A(\eps x_\eps(t))\cdot \int
p_{\eps}^j(x,t)dx\\
& \qquad -\frac{1}{p+1}\sum_{j=1}^m\alpha_j\int
|\phi_{\eps}^j(x,t)|^{2p+2}dx -\frac{1}{p+1}\sum_{i,j,\,\, i\neq
j}^m\gamma_{ij} \int
|\phi_{\eps}^i(x,t)|^{p+1}|\phi_{\eps}^j(x,t)|^{p+1}dx.
\end{align*}
Then, taking into account the definition of $E_\eps(t)$ and of
$\mathcal H$ and Lemma~\ref{estUNO}, we obtain
\begin{align*}
\mathcal E(\psi_{\eps}(t))-{\mathcal E}(r) &=M{\mathcal H}(t)
-\int V(\eps x)|\phi_{\eps}(x,t)|^{2}dx+\frac12 M|\xi_\eps(t)+A(\eps x_\eps(t))|^2   \\
&-(\xi_\eps(t)+A(\eps x_\eps(t))\cdot \int \sum_{j=1}^m p_{\eps}^j(x,t)dx  \\
& -\frac{1}{2}\int |A(\eps x)|^2|\phi_\eps(x,t)|^2dx+\int A(\eps
x)\cdot \sum_{j=1}^m p_\eps^j(x,t)dx \\
& +\frac{1}{2}\sum_{j=1}^m\beta_j \iint \Phi(\eps (x-y))|\phi_\eps^j(x,t)|^2|\phi^j_\eps(y,t)|^2 dx dy \\
& +\frac{1}{2}\sum_{i,j,\,i\neq j}^m\omega_{ij} \iint
\Phi(\eps(x-y))|\phi_\eps^i(x,t)|^2|\phi^j_\eps(y,t)|^2 dx dy+{\mathcal O}(\eps^2),
\end{align*}
as $\eps$ goes to zero. Finally, since
$p_\eps^j(x,t)=(p_\eps^A)^j(x,t)+A(\eps x)|\phi_\eps^j(x,t)|^2$
and recalling the definition of $q_\eps^A$, we obtain the desired
conclusion.
\end{proof}

\vskip3pt
\noindent
Now let us introduce two functionals in the dual space of $C^2$
\begin{align}
     \int \Pi^1_\eps(x,t) \cdot\varphi(x) dx &=
    \int \varphi(\eps x)\cdot q^A_\eps(x,t)dx-M\varphi(\eps x_\eps(t))\cdot\xi_\eps(t), \quad \forall\varphi\in
C^2(\R^N;\R^N),\label{pigreco1}\\
 \int \Pi^2_\eps(x,t)\varphi(x) dx & =\int \varphi(\eps
x)|\phi_\eps(x,t)|^2dx-M\varphi(\eps x_\eps(t)),  \quad \forall
\varphi\in C^2(\R^N;\R),\label{pigreco2}
\end{align}
for all $t\in\R^+$, where $M$ is given in formula~\eqref{valueofm}.
Moreover, define the function $\Omega_\eps:\R^+\to\R^+$ as $\Omega_\eps(t)=\hat\Omega_\eps(t)+\rho_\eps^A(t)$, where
\begin{align}
\label{defOmega} \hat\Omega_\eps(t)&:=\Big|\int
\Pi^1_\eps(x,t)dx\Big|+\sup_{\|\varphi\|_{C^3}\leq 1} \Big|\int
\Pi^2_\eps(x,t)\varphi(x)dx\Big|+|\gamma_\eps(t)|, \qquad t\in\R^+ \\
\rho_\eps^A(t) &:=\Big|\int \Pi_\eps^1(x,t)\cdot A(x)dx\Big|,\qquad t\in\R^+  \notag
\end{align}
and
$$
\gamma_{\eps}(t):=M\eps x_\eps(t)- \int
\eps x\chi(\eps x)|\phi_{\eps}(x,t)|^{2}dx, \qquad t\in\R^+,
$$
where $\chi\in C^\infty(\R^N)$ is such that $0\leq \chi\leq 1$,
$\chi(x)=1$ in $B(0, \tilde \rho)$ and
$\chi(x)=0$ in $\R^N \setminus B(0, 2\tilde\rho)$,
for a suitable $\tilde\rho>0$ that will be suitably
chosen later.
\vskip3pt

Now we are able to prove an estimate on the energy of $\psi_\eps$.

\begin{lemma}\label{lemmaOmegaepsilon}
Assume that $\Phi=0$ if $m>1$ and
let $\psi_\eps$ be the function defined in formula~\eqref{veps}.
Then there exists a positive constant $C$ independent of $\eps$ such that
\begin{equation*}
0\leq \mathcal E(\psi_{\eps}(t)) -\mathcal E(r) \leq
C\Omega_\eps(t)+{\mathcal O}(\eps^2),
\end{equation*}
for all $t\in\R^+$ and any $\eps>0$.
\end{lemma}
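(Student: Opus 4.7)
The plan is to combine Lemma~\ref{estDUE} with the duality functionals $\Pi^{1}_\eps,\Pi^{2}_\eps$ of~(\ref{pigreco1})--(\ref{pigreco2}), in order to display $\mathcal E(\psi_\eps(t))-\mathcal E(r)$ as the sum of a trajectory-evaluated piece (that should telescope to $M\mathcal M$) and pairings of $\Pi^{1,2}_\eps$ against $C^{3}$ test functions, which by definition of $\Omega_\eps$ are controlled by $\Omega_\eps(t)$. The lower bound is immediate: by~(\ref{veps}) and Property~\ref{wellP}, $\|\psi_\eps(t)\|_{L^2}^{2}=\sum_j m_j=\|r\|_{L^2}^{2}$; then the diamagnetic inequality $|\nabla|\psi_\eps^{j}||\leq|\nabla\psi_\eps^{j}|$ combined with the variational characterization~(\ref{variatcaract-r}) applied to the real vector $(|\psi_\eps^{1}|,\dots,|\psi_\eps^{m}|)$ yields $\mathcal E(\psi_\eps(t))\geq\mathcal E(|\psi_\eps(t)|)\geq\mathcal E(r)$.

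For the upper bound I would substitute, inside the identity of Lemma~\ref{estDUE}, each term of type $\int f(\eps x)|\phi_\eps(x,t)|^{2}dx$ (with $f\in\{V,A_i,|A|^{2}\}$) by $Mf(\eps x_\eps(t))+\int\Pi^{2}_\eps(x,t)f(x)dx$, and each $\int f(\eps x)\cdot q_\eps^{A}(x,t)dx$ (with $f=A$) by $Mf(\eps x_\eps(t))\cdot\xi_\eps(t)+\int\Pi^{1}_\eps(x,t)\cdot f(x)dx$. Expanding $\tfrac12 M|\xi_\eps(t)+A(\eps x_\eps(t))|^{2}$ and rewriting $M\mathcal H(t)=\tfrac{M}{2}|\xi_\eps(t)|^{2}+MV(\eps x_\eps(t))+M\mathcal M$, a direct algebraic simplification causes every trajectory-evaluated contribution to cancel, leaving exactly $M\mathcal M$ plus a linear combination of $\Pi^{1,2}_\eps$-pairings. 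Each such pairing is then bounded in modulus by $C\bigl(\hat\Omega_\eps(t)+\rho^{A}_\eps(t)\bigr)=C\Omega_\eps(t)$ using~(\ref{defOmega}), the hypotheses $\|V\|_{C^{3}},\|A\|_{C^{3}}<\infty$, and the uniform-in-$\eps,t$ bound $|\xi_\eps(t)|^{2}\leq 2(\mathcal H(0)-\mathcal M)$, itself a consequence of $V\geq 0$ and the conservation of $\mathcal H$ along~(\ref{DriveS}).

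The main obstacle is the scalar nonlocal term, since for $m=1$ the hypothesis does not impose $\Phi\equiv 0$ and one has to show that $\tfrac{\beta_1}{2}\iint\Phi(\eps(x-y))|\phi_\eps|^{2}|\phi_\eps|^{2}dxdy+M\mathcal M$ is controlled by $C\Omega_\eps(t)+\mathcal O(\eps^{2})$. The key trick is a \emph{double} application of $\Pi^{2}_\eps$: for fixed $y$, test $\Pi^{2}_\eps$ in the $x$-variable against $\varphi_y(w):=\Phi(w-\eps y)$, whose $C^{3}$ norm is controlled by $\|\Phi\|_{C^{3}}$ uniformly in $\eps,y$; integrate the resulting identity against $|\phi_\eps(y,t)|^{2}dy$ and then apply $\Pi^{2}_\eps$ once more in the $y$-variable against $\tilde\varphi(w):=\Phi(\eps x_\eps(t)-w)$, which also has a uniform $C^{3}$ bound. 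One obtains
\begin{equation*}
\iint\Phi(\eps(x-y))|\phi_\eps(x,t)|^{2}|\phi_\eps(y,t)|^{2}\,dxdy=M^{2}\Phi(0)+\mathcal R_\eps(t),
\qquad |\mathcal R_\eps(t)|\leq C\|\Phi\|_{C^{3}}\hat\Omega_\eps(t).
\end{equation*}
Since $M=m_1$ in the scalar case, the constant $\tfrac{\beta_1}{2}M^{2}\Phi(0)$ exactly cancels $M\mathcal M=-\tfrac{\beta_1}{2}\Phi(0)m_1^{2}$, so only remainders of size $C\Omega_\eps(t)$ survive; when $m>1$, the assumption $\Phi\equiv 0$ annihilates both $\mathcal M$ and the nonlocal integral simultaneously. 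Adding the $\mathcal O(\eps^{2})$ inherited from Lemma~\ref{estDUE} yields the claim.
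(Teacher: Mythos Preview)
Your proposal is correct and follows essentially the same route as the paper: you invoke Lemma~\ref{estDUE}, replace each potential-weighted integral by its trajectory value plus a $\Pi^{1}_\eps$ or $\Pi^{2}_\eps$ pairing, observe the algebraic cancellation of all trajectory-evaluated pieces (leaving $M\mathcal M$), bound the pairings by $\Omega_\eps(t)$, and handle the scalar nonlocal term by the same double application of $\Pi^{2}_\eps$ against the shifted test functions $\Phi(\cdot-\eps y)$ and $\Phi(\eps x_\eps(t)-\cdot)$. Your explicit justification of the lower bound via $\mathcal E(\psi_\eps)\geq\mathcal E(|\psi_\eps|)\geq\mathcal E(r)$ and of the uniform bound on $|\xi_\eps(t)|$ through conservation of $\mathcal H$ are details the paper leaves implicit; note also that the paper uses this proof as an occasion to establish the side claim $\Omega_\eps(0)=\mathcal O(\eps^{2})$, which is not part of the lemma's statement but is needed later.
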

\begin{proof}
We claim that $\Omega_\eps(0)=\mathcal O (\eps^2)$ as $\eps$ goes
to zero. In fact, by definition of $\Omega_\eps$, we have
\begin{equation}\label{omega0}
\Omega_\eps(0)=\Big|\int
\Pi^1_\eps(x,0)dx\Big|+\sup_{\|\varphi\|_{C^3}\leq 1} \Big|\int
\Pi^2_\eps(x,0)\varphi(x)dx\Big|+|\gamma_\eps(0)|+\rho_\eps^A(0).
\end{equation}
First of all, let us estimate the first term in the right-hand side
of~\eqref{omega0}. Taking $\varphi \equiv 1$ in \eqref{pigreco1} and
using \eqref{initialD}, we get
$$
\begin{aligned}
\int \Pi^1_\eps(x,0)dx& =\int q^A_\eps(x,0)dx-M\xi(0)\\
& =\sum_{j=1}^{m}\int \im\big(\bar\phi_\eps^j(x,0)(
\nabla\phi_\eps^j(x,0)-\iu
A(\eps x)\phi_\eps^j(x,0))\big)dx-M\xi_0\\
& =\sum_{j=1}^{m}\int
r_j^2(x-x_\eps(0))\Big[A(x_0)+\xi_0-A(\eps x)\big]dx-M\xi_0\\
& = MA(x_0)-\sum_{j=1}^{m}\int
r_j^2(x-x_\eps(0))A(\eps x)dx\\
& = MA(x_0)-\sum_{j=1}^{m}\int r_j^2(x)A(\eps x+x_0)dx= \mathcal O
(\eps^2),
\end{aligned}
$$
as $\eps$ goes to zero, in light of Lemma~\ref{pote}. In a similar fashion, one gets
$\rho^A_\eps(0)= {\mathcal O}(\eps^2)$. Now consider the
second term in the right-hand side of~\eqref{omega0}. Let
$\varphi\in C^3(\R^N)$ with $\|\varphi\|_{C^3}\leq 1$. Then,
$$\begin{aligned}
\int \Pi^2_\eps(x,0)\varphi(x) \,dx& =\int
\varphi(\eps x)|\phi_\eps(x,0)|^2dx-M\varphi(x(0))\\
& = \sum_{j=1}^{m}\int \varphi(\eps x+x_0)
r_j^2(x)dx-M\varphi(x_0)=\mathcal O (\eps^2)
\end{aligned}$$
as $\eps$ goes to zero, again using Lemma~\ref{pote}. We finally
estimate $\gamma_\eps(0)$. As above we have
$$
\begin{aligned}
\gamma_{\eps}(0)& =Mx(0)-\eps \int
x\chi(\eps x)|\phi_{\eps}(x,0)|^{2}dx\\
& = Mx_0-\eps \sum_{j=1}^{m}\int
x\chi(\eps x)r_j^2(x-x_\eps(0))dx
 = Mx_0-\sum_{j=1}^{m}\int
( \eps x+x_0)\chi(\eps x+x_0)r_j^2(x)dx\\
& = Mx_0-\sum_{j=1}^{m}\int
x_0\chi(x_0)r_j^2(x)dx+\mathcal O (\eps^2)
 = Mx_0\left(1-\chi(x_0)\right)+\mathcal O (\eps^2),
\end{aligned}
$$
thanks to Lemma~\ref{pote}.
Now, from~\cite[Lemma 3.1-3.2]{Keerani2} (where one has to use
the $\delta_a$ at some point $a$ is defined as $\langle \delta_a,\varphi\rangle=\varphi(\eps a)$
for all $\varphi\in C^2(\R^N)$), we learn that
there exist three positive constants $K_{0},\,K_{1},\,K_{2}$ such
that, for all $y,z\in\R^N$,
$K_{1}|\eps y-\eps z|\leq \|\delta_{y}-\delta_{z}\|_{C^{2*}}\leq K_{2}|\eps y-\eps z|$,
provided that $\|\delta_{y}-\delta_{z}\|_{C^{2*}}\leq K_{0}$.
Let then $\tilde \rho=K_1\sup_{\eps\in[0,1]}\sup_{t\in [0,
T_0/\eps]}|\eps x_\eps(t)|+K_0$, where $T_0>0$ is fixed (to be chosen later
on, see Lemma~\ref{lemmaT0}).
Then, in view of the definition of $\chi$, we obtain that
$\gamma_{\eps}(0)=\mathcal O(\eps^2)$ as $\eps$ goes to zero, since
$|x_0|<\tilde\rho$. Hence the claim is proved.
\vskip4pt
\noindent
Now we are ready to prove the assertion of
Lemma~\ref{lemmaOmegaepsilon}. By using Lemma~\ref{estDUE}, the
definition of $\mathcal H$,~\eqref{pigreco1} and~\eqref{pigreco2} we obtain
\begin{align*}
& \mathcal E(\psi_{\eps}(t)) -\mathcal E(r) \\
& = \frac 1 2M |\xi_\eps(t)|^2+MV(\eps x_\eps(t))+M{\mathcal M} \\
&-\int V(\eps
x)|\phi_{\eps}(x,t)|^{2}dx
+\frac12 M|\xi_\eps(t)+A(\eps x_\eps(t))|^2 \\
& -\int \Pi_\eps^1(x,
t)\Big[(\xi_\eps(t)+A(\eps x_\eps(t)))\Big]dx-M\Big[\xi_\eps(t)+A(\eps x_\eps(t))\Big]\cdot\xi_\eps(t)\\
& -(\xi_\eps(t) +A(\eps x_\eps(t))\cdot \Big(\int \Pi_\eps^2(x,
t) A(x)dx+MA(\eps x_\eps(t))\Big)\\
& +\frac{1}{2}\int |A(\eps x)|^2|\phi_\eps(x,t)|^2dx
+\int \Pi_\eps^1(x,t)A(x)dx+MA(\eps x_\eps(t))\cdot \xi_\eps(t)  \\
& +\frac{1}{2}\sum_{j=1}^m\beta_j \iint \Phi(\eps (x-y))|\phi_\eps^j(x,t)|^2|\phi^j_\eps(y,t)|^2 dx dy \\
& +\frac{1}{2}\sum_{i,j,\,i\neq j}^m\omega_{ij} \iint
\Phi(\eps(x-y))|\phi_\eps^i(x,t)|^2|\phi^j_\eps(y,t)|^2 dx dy+{\mathcal O}(\eps^2).
\end{align*}
    Let us set (with the convention that $\omega_{ii}=\beta_i$)
    \begin{equation*}
 \eta_\eps(t)=\left|\sum_{i,j=1}^m \frac{\omega_{ij}}{2}\iint \Phi(\eps (x-y))
|\phi_\eps^i(x,t)|^2|\phi_\eps^j(y,t)|^2dxdy-\Phi(0)\sum_{i,j=1}^m \frac{\omega_{ij}}{2} m_im_j\right|.
\end{equation*}
In turn, using the definition of ${\mathcal M}$, we have
\begin{align*}
& \mathcal E(\psi_{\eps}(t)) -\mathcal E(r) \\
&\leq \eta_\eps(t)+ MV(\eps x_\eps(t)) -\int \Pi_\eps^2(x, t)V(x)dx-MV(\eps x_\eps(t))
+\frac12 M|A(\eps x_\eps(t))|^2 \\
& -\int \Pi_\eps^1(x, t)\Big[\xi_\eps(t)+A(\eps x_\eps(t))\Big]dx
-(\xi_\eps(t) +A(\eps x_\eps( t))\int \Pi_\eps^2(x,
t) A(x)dx-M|A(\eps x_\eps(t))|^2\\
& +\frac{1}{2}\int \Pi_\eps^2(x, t)|A(x)|^2dx +\frac 1 2
M|A(\eps x_\eps(t))|^2+\int \Pi_\eps^1(x, t)A(x)dx+{\mathcal O}(\eps^2)\\
& =\eta_\eps(t)-\int \Pi_\eps^2(x, t)V(x)dx
 -\int \Pi_\eps^1(x, t)\Big[\xi_\eps( t)+A(\eps x_\eps( t))\Big]dx \\
& -(\xi_\eps( t) +A(\eps x_\eps( t))\int \Pi_\eps^2(x,t) A(x)dx  \\
& +\frac{1}{2}\int \Pi_\eps^2(x, t)|A(x)|^2dx +\int \Pi_\eps^1(x, t)A(x)dx+{\mathcal O}(\eps^2)
\leq \eta_\eps(t)+C\Omega_\eps(t)+\mathcal O(\eps^2),
\end{align*}
for $\eps$ sufficiently small. If $m>1$ we assume that $\Phi=0$, and the assertion follows.
If instead $m=1$, observe first that from definition~\eqref{pigreco2},
by choosing $\varphi(x)=\Phi(x-\eps y)$ and $\varphi(y)=\Phi(\eps x_\eps(t)-y)$
respectively, we have
\begin{align*}
 \int \Phi(\eps x-\eps y)|\phi_\eps^1(x,t)|^2dx  &=
\int \Pi^2_\eps(x,t)\Phi(x-\eps y) dx +m_1\Phi(\eps x_\eps(t)-\eps y),  \\
 m_1\int \Phi(\eps x_\eps(t)-\eps y))|\phi_\eps^1(y,t)|^2dy &=
m_1\int \Pi^2_\eps(y,t)\Phi(\eps x_\eps(t)- y) dy +\Phi(0)m_1^2.
\end{align*}
In turn, we have
\begin{align*}
\eta_\eps(t) & \leq C\Big|\int\left[
\int \Pi^2_\eps(x,t)\Phi(x-\eps y) dx +m_1\Phi(\eps x_\eps(t)-\eps y)
\right]|\phi_\eps^1(y,t)|^2dy-\Phi(0)m_1^2\Big| \\
& = C\Big|\int\left[
\int \Pi^2_\eps(x,t)\Phi(x-\eps y) dx\right]|\phi_\eps^1(y,t)|^2dy +m_1\int \Phi(\eps x_\eps(t)-\eps y)
|\phi_\eps^1(y,t)|^2dy-\Phi(0)m_1^2\Big| \\
&\leq Cm_1\sup_{\|\varphi\|_{C^3}\leq 1} \left|\int
\Pi^2_\eps(x,t)\varphi(x)dx\right|+C m_1\left|\int \Pi^2_\eps(y,t)\Phi(\eps x_\eps(t)- y) dy\right|  \\
&\leq C\sup_{\|\varphi\|_{C^3}\leq 1} \left|\int
\Pi^2_\eps(x,t)\varphi(x)dx\right|\leq C\hat \Omega_\eps(t)\leq C\Omega_\eps(t).
\end{align*}
In turn, we conclude that
$$
\mathcal E(\psi_{\eps}(t)) -\mathcal E(r)\leq C\Omega_\eps(t)+\mathcal O(\eps^2)
$$
as $\eps$ goes to zero, for some positive constant $C$. Hence the
proof of Lemma~\ref{lemmaOmegaepsilon} is complete.
\end{proof}

Since the function $\{t\mapsto\Omega_\eps(t)\}$ given
in~\eqref{defOmega} is continuous and recalling that
$\Omega_\eps(0)={\mathcal O}(\eps^2)$ as $\eps\to 0$ (see the proof of
Lemma~\ref{lemmaOmegaepsilon}), for any fixed $T_0>0$ and
$\sigma_0>0$, we can define the time
\begin{equation}
\label{Tepsdef} T^*_\eps:=\sup\big\{t\in
[0,T_0/\eps]:\,\Omega_\eps(s),\,\Gamma_{\psi_\eps(s)}\leq\sigma_0,\,\,\text{for all $s\in
(0,t)$}\big\}>0,
\end{equation}
for any $\eps>0$, where $\Gamma_{\psi_\eps}$ is defined according to~\eqref{gammaUdef} and $\Gamma_{\psi_\eps(0)}=0$.
Now we are able to provide the main result of this section, related to
a representation formula for the solution $\phi_\eps$ of
problem~\eqref{problema}. For the proof, it is enough to adapt the
proof of \cite[Theorem~4.2]{squa}. The fact

\begin{theorem}\label{chiave}
Let $\phi_{\eps}$ be the family of solutions to
problem~\eqref{problema} corresponding to the initial datum~\eqref{initialD}
modelled on a ground state solution $r$ of problem~\eqref{seMF} and let
$(x_\eps(t), \xi_\eps(t))$ be the global solution of~\eqref{DriveS}.
Then there exist positive constants $\eps_{0}$ and $C$, locally bounded functions
$\theta^1_{\eps}, \dots, \theta^m_{\eps}:\R^+\to[0,2\pi)$ and $y_{\eps}:\R^+\to\R^N$ such that
\begin{equation*}
    \phi_\eps^j(x,t)=e^{\iu(\xi_\eps(t)\cdot x+\theta^j_\eps(t)+A(\eps x_\eps(t))
    \cdot (x-x_\eps(t))}r_j(x-y_\eps(t))+\omega^j_\eps(t),
\end{equation*}
where $\|\omega^j_\eps(t)\|_{H^1}\leq C\sqrt{\Omega_\eps(t)}+{\mathcal O}(\eps)$,
for all $\eps\in(0,\eps_{0})$, $t\in [0,T_{\eps}^{*})$ and $j=1,
\dots, m$.
\end{theorem}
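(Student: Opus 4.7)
The plan is to apply Property~\ref{stabPhi} (non-degeneracy/energy convexity) to the translated and gauge-corrected solution $\psi_\eps(t)$ defined in~\eqref{veps}, select modulation parameters near the infimum appearing in~\eqref{gammaUdef}, and finally undo the gauge transformation to obtain the representation formula for $\phi_\eps^j$.

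\textbf{Step 1 (closeness of $\psi_\eps(t)$ to a ground state modulo symmetries).} For each $t\in[0,T^*_\eps)$, the definition~\eqref{Tepsdef} of $T^*_\eps$ ensures $\Gamma_{\psi_\eps(t)}\leq\sigma_0$; choosing $\sigma_0<C'$ (with $C'$ from Property~\ref{stabPhi}), and noting that $\|\psi_\eps(t)\|_{L^2}^2=M=\|r\|_{L^2}^2$ by a change of variable and~\eqref{valueofm}, we may invoke Property~\ref{stabPhi} and combine it with Lemma~\ref{lemmaOmegaepsilon} to deduce
$$\Gamma_{\psi_\eps(t)}\leq C\bigl(\mathcal E(\psi_\eps(t))-\mathcal E(r)\bigr)\leq C\Omega_\eps(t)+\mathcal O(\eps^2).$$
By the definition~\eqref{gammaUdef} of $\Gamma$ as an infimum, one selects locally bounded shift functions $y^*_\eps:[0,T^*_\eps)\to\R^N$ and $\theta^1_\eps,\dots,\theta^m_\eps:[0,T^*_\eps)\to[0,2\pi)$ realizing this infimum up to a factor two, so that
$$\sum_{j=1}^m\|\psi_\eps^j(\cdot,t)-e^{\iu\theta^j_\eps(t)}r_j(\cdot+y^*_\eps(t))\|_{H^1}^2\leq 2\Gamma_{\psi_\eps(t)}\leq C\Omega_\eps(t)+\mathcal O(\eps^2).$$
Local boundedness of $y^*_\eps$ follows from the smallness of $\Gamma_{\psi_\eps(t)}$ together with the spatial decay of $r_j$, while measurability (and indeed continuity) is standard via an implicit-function argument around the $N$-dimensional kernel $\{\partial_i r\}_{i=1}^N$ of the linearized operator at $r$, where the non-degeneracy assumption enters.

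\textbf{Step 2 (undo the gauge transformation).} Inverting~\eqref{veps} gives
$$\phi_\eps^j(x,t)=e^{\iu[\xi_\eps(t)\cdot x+A(\eps x_\eps(t))\cdot(x-x_\eps(t))]}\,\psi_\eps^j(x-x_\eps(t),t),$$
so that setting $y_\eps(t):=x_\eps(t)-y^*_\eps(t)$ and substituting the Step~1 approximation yields the claimed representation with error
$$\omega_\eps^j(x,t)=e^{\iu[\xi_\eps(t)\cdot x+A(\eps x_\eps(t))\cdot(x-x_\eps(t))]}\bigl[\psi_\eps^j(x-x_\eps(t),t)-e^{\iu\theta^j_\eps(t)}r_j(x-y_\eps(t))\bigr].$$
Since the prefactor has modulus one, the $L^2$ norm of $\omega_\eps^j$ coincides with that of the bracket; differentiating the prefactor contributes extra multiplicative factors of size $|\xi_\eps(t)+A(\eps x_\eps(t))|$, which are uniformly bounded in $(t,\eps)$ thanks to the ODE~\eqref{DriveS} and hypothesis~\textbf{(A)}. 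Therefore
$$\|\omega_\eps^j(t)\|_{H^1}\leq C\|\psi_\eps^j(\cdot,t)-e^{\iu\theta^j_\eps(t)}r_j(\cdot+y^*_\eps(t))\|_{H^1}\leq C\sqrt{\Omega_\eps(t)}+\mathcal O(\eps),$$
using $\sqrt{a+b}\leq\sqrt{a}+\sqrt{b}$ to separate the two contributions in the Step~1 estimate.

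\textbf{Main obstacle.} All the analytic difficulty has in effect been front-loaded into Property~\ref{stabPhi}: the coercivity of $\mathcal E$ on the mass constraint modulo the $(m+N)$-parameter symmetry group generated by translations and phase multiplications. Granted that coercivity together with the energy bound of Lemma~\ref{lemmaOmegaepsilon}, the only delicate step remaining here is the continuous selection of the modulation parameters $(y^*_\eps(t),\theta^j_\eps(t))$, which is carried out exactly as in \cite[Theorem~4.2]{squa} by exploiting the non-degeneracy of $r$; the rest of the argument is the algebraic manipulation of the gauge factor described above.
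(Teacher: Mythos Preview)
Your proposal is correct and follows exactly the approach indicated in the paper, which simply defers to \cite[Theorem~4.2]{squa}: apply the energy convexity inequality (Property~\ref{stabPhi}) to $\psi_\eps(t)$, control $\mathcal E(\psi_\eps(t))-\mathcal E(r)$ via Lemma~\ref{lemmaOmegaepsilon}, choose near-optimal modulation parameters in~\eqref{gammaUdef}, and invert the gauge/translation in~\eqref{veps}. Your write-up in fact supplies more detail than the paper itself does for this step; the only point to tighten is the justification of local boundedness of $y^*_\eps$, which should be phrased in terms of the $H^1$-continuity of $t\mapsto\psi_\eps(t)$ (from Property~\ref{wellP}) combined with the decay of $r_j$, rather than smallness of $\Gamma_{\psi_\eps(t)}$ alone.
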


\section{Density and momentum identities}
\label{masssection}

This section is devoted to some important identities involving the
momentum $p_{\eps}^A$ and the total magnetic momentum $q_{\eps}^A$
related to problem~\eqref{problema}.

\begin{proposition}\label{identita}
Let $\phi_\eps$ be the solution to problem~\eqref{problema}
corresponding to the initial datum~\eqref{initialD}. Then the
following identities hold true
\begin{equation}
\label{identitafiepsilon} \frac{\partial
|\phi_{\eps}^j|^{2}}{\partial t}(x,t)=-{\rm div}_{x}\,
(p_{\eps}^A)^j(x,t),\quad x\in\R^N,\, t\in\R^+,\, j=1, \dots, m,
\end{equation}
\begin{equation}\label{identitamomento}
\begin{aligned}
\int \frac{\partial q^A_{\eps}}{\partial t}(x,t)dx & = -\int
q_{\eps}^A(x,t)\times \eps B(\eps x)dx -\int \eps\nabla
V(\eps x)|\phi_{\eps}(x,t)|^{2}dx\\
&\quad  + \sum_{j=1}^m\beta_j\iint
\eps\nabla\Phi(\eps(x-y))|\phi_\eps^j(y)|^2|\phi_\eps^j(x)|^2dxdy\\
&\quad  + \sum_{i,j=1, i\neq j}^m\omega_{ij}\iint
\eps\nabla\Phi(\eps(x-y))|\phi_\eps^i(y)|^2|\phi_\eps^j(x)|^2dxdy,
\end{aligned}
\end{equation}
for $t\in\R^+$, where $B=\nabla\times A$ is the magnetic field
associated with $A$.
\end{proposition}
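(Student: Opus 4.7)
Both identities are Ehrenfest-type hydrodynamic laws arising directly from~\eqref{problema}: \eqref{identitafiepsilon} is a continuity equation for the mass density $|\phi_\eps^j|^2$, and \eqref{identitamomento} is the (integrated) momentum law for $q_\eps^A=\sum_j(p_\eps^A)^j$. The plan is to differentiate each density in $t$, substitute $\iu\partial_t\phi_\eps^j$ from the equation, and exploit the self-adjointness of $L_A$ in the sesquilinear $L^2$-pairing together with the symmetries $\gamma_{ij}=\gamma_{ji}$ and $\omega_{ij}=\omega_{ji}$.

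For~\eqref{identitafiepsilon} I multiply the $j$-th equation by $\overline{\phi_\eps^j}$ and take the imaginary part. The contributions from $V\phi_\eps^j$, $|\phi_\eps|_j^{2p}\phi_\eps^j$ and $\bigl(\Phi(\eps\cdot)\ast|\phi_\eps|_j^2\bigr)\phi_\eps^j$ are purely real after pairing with $\overline{\phi_\eps^j}$ (all three coefficients being real-valued) and therefore vanish under $\Im$. Writing $L_A=\tfrac12 D^2$ with $D=\iu^{-1}\nabla-A(\eps x)$, as one checks directly from~\eqref{MagnOper}, and combining the pointwise identities $\mathrm{div}\,\Im(\overline{\phi_\eps^j}\nabla\phi_\eps^j)=\Im(\overline{\phi_\eps^j}\Delta\phi_\eps^j)$ and $A(\eps x)\!\cdot\!\nabla|\phi_\eps^j|^2=2\Re(\overline{\phi_\eps^j}A(\eps x)\!\cdot\!\nabla\phi_\eps^j)$ one then verifies $2\Im(\overline{\phi_\eps^j}L_A\phi_\eps^j)=-\mathrm{div}_x\bigl(\Im(\overline{\phi_\eps^j}\nabla\phi_\eps^j)-A(\eps x)|\phi_\eps^j|^2\bigr)=-\mathrm{div}_x(p_\eps^A)^j$, which is exactly~\eqref{identitafiepsilon}.

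For~\eqref{identitamomento} I differentiate $(p_\eps^A)^j_k=\Re(\overline{\phi_\eps^j}D_k\phi_\eps^j)$ in $t$, substitute the equation of motion, and integrate in $x$; after using self-adjointness of each piece of the Hamiltonian, the resulting integrand collapses to $-\Im\int\overline{\phi_\eps^j}[H_\bullet,D_k]\phi_\eps^j\,dx$. The $V$-piece, via $[V,D_k]=\iu\eps(\partial_k V)(\eps x)$, yields the electric force $-\int\eps(\partial_k V)(\eps x)|\phi_\eps^j|^2\,dx$. The local nonlinearity produces an integrand proportional to $\partial_k|\phi_\eps|_j^{2p}\,|\phi_\eps^j|^2$; once expanded and summed in $j$, the diagonal pieces reassemble as the total derivatives $\tfrac{\alpha_j}{p+1}\partial_k|\phi_\eps^j|^{2p+2}$, and the cross pieces, symmetrized through $\gamma_{ij}=\gamma_{ji}$, collapse into $\tfrac{\gamma_{ij}}{p+1}\partial_k\bigl(|\phi_\eps^i|^{p+1}|\phi_\eps^j|^{p+1}\bigr)$; both integrate to zero. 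The nonlocal piece uses $\partial_k\bigl(\Phi(\eps\cdot)\ast|\phi_\eps|_j^2\bigr)(x)=\int\eps(\partial_k\Phi)(\eps(x-y))|\phi_\eps|_j^2(y)\,dy$, and expanding $|\phi_\eps|_j^2(y)=\beta_j|\phi_\eps^j(y)|^2+\sum_{i\neq j}\omega_{ij}|\phi_\eps^i(y)|^2$ and then summing in $j$ reproduces exactly the last two lines of~\eqref{identitamomento}.

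The delicate step is the magnetic contribution $-\Im\int\overline{\phi_\eps^j}[L_A,D_k]\phi_\eps^j\,dx$. Using the curvature commutator $[D_k,D_l]=\iu\eps\tilde B_{kl}(\eps x)$ with $\tilde B_{kl}=\partial_k A_l-\partial_l A_k$, the Leibniz expansion $[L_A,D_k]=\tfrac12\sum_l\bigl(D_l[D_l,D_k]+[D_l,D_k]D_l\bigr)$ produces a principal first-order term $\iu\eps\sum_l\tilde B_{lk}(\eps x)D_l$ plus a real $\eps^2$-multiplication operator involving $(\partial_l\tilde B_{lk})(\eps x)$. The latter is killed by the outer $\Im$, whereas $\Im(\iu\overline{\phi_\eps^j}D_l\phi_\eps^j)=\Re(\overline{\phi_\eps^j}D_l\phi_\eps^j)=(p_\eps^A)^j_l$; summing over $j$ and identifying the antisymmetric 2-form $\tilde B$ with the magnetic field $B=\nabla\times A$ then yields precisely the cross-product term $-\int q_\eps^A\times\eps B(\eps x)\,dx$ appearing in~\eqref{identitamomento}. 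This commutator bookkeeping---in particular the cancellation of the $\eps^2$-piece under $\Im$ and the conversion of the remaining first-order term into $p_\eps^A$ via $\Im(\iu\,\cdot)=\Re(\cdot)$---is the only non-routine ingredient; the rest is integration by parts and symmetrization.
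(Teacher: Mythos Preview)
Your proof is correct. Both identities follow exactly as you describe: the continuity equation from pairing the $j$-th equation with $\overline{\phi_\eps^j}$ and taking imaginary parts, and the momentum law from the commutator expansion $\int\partial_t(p_\eps^A)^j_k\,dx=-\Im\int\overline{\phi_\eps^j}\,[H_j,D_k]\,\phi_\eps^j\,dx$, where the $V$-, nonlinear-, nonlocal-, and $L_A$-pieces of $H_j$ contribute respectively the electric force, zero (after the $\gamma_{ij}=\gamma_{ji}$ symmetrization you indicate), the $\Phi$-terms, and the magnetic Lorentz term via the curvature commutator $[D_k,D_l]=\iu\eps\tilde B_{kl}$.

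Your route differs in organization from the paper's. The paper proceeds by brute force: it writes $\partial_t(q_\eps^A)_\ell$ as three pieces---$2\Im(\partial_t\bar\phi_\eps^j\partial_\ell\phi_\eps^j)$, a boundary term, and $-A_\ell\partial_t|\phi_\eps^j|^2$---then multiplies the conjugated equation by $2\iu\partial_\ell\phi_\eps^j$, expands term by term, and performs many integrations by parts (including separate treatments of the $|A|^2$ and $\mathrm{div}\,A$ contributions coming from $L_A$). Your Ehrenfest-style packaging via $[\,\cdot\,,D_k]$ is more structured: the commutators $[D_k,D_l]$ isolate the magnetic curvature at once, and the observation that real multiplication pieces of $[L_A,D_k]$ die under $\Im$ replaces several explicit cancellations in the paper's computation. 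The paper's approach has the advantage of being entirely elementary (no operator-algebra bookkeeping), while yours makes the physical structure (Ehrenfest theorem, Lorentz force from curvature) transparent and is less prone to sign slips. Either way the substance---integration by parts together with the symmetries $\gamma_{ij}=\gamma_{ji}$, $\omega_{ij}=\omega_{ji}$---is the same.
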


\begin{proof}
    The proof follows the lines of the corresponding proof in~\cite{squa}
    for the scalar case without the presence of nonlocal potentials.
By formula~\eqref{paepsilon}, for any $j=1,\dots,m$,  $(p_{\eps}^A)^j$ is the vector whose
components, which we denote by $(p_{\eps}^A)^j_\ell$, are given by
$(p_{\eps}^A)^j_\ell=\im\big(\bar\phi_\eps^j(x,t)(
\partial_\ell\phi_\eps^j(x,t)-\iu A_\ell(\eps x)\phi_\eps^j(x,t))\big)$, for $\ell=1, \dots N$.
Let us fix $j=1, \dots, m$. Hence
\begin{align*}
-{\rm div}_{x}\,(p_{\eps}^A)^j(x,t)& =-\sum_{\ell=1}^N\im\big(\partial_\ell\bar\phi_\eps^j(x,t)
(\partial_\ell\phi_\eps^j(x,t)-\iu A_\ell(\eps x)\phi_\eps^j(x,t)\big)   \\
& \qquad -\sum_{\ell=1}^N\im\big(\bar\phi_\eps^j(x,t) (
\partial_{\ell\ell}^2\phi_\eps^j(x,t)-\iu \partial_\ell
A_\ell(\eps x)\phi_\eps^j(x,t)
-\iu A_\ell(\eps x)\partial_\ell\phi_\eps^j(x,t)\big) \\
& =2A(\eps
x)\cdot\re\big(\nabla\bar\phi_\eps^j(x,t)\phi_\eps^j(x,t)\big)
-\im\big(\bar\phi_\eps^j(x,t)\Delta\phi_\eps^j(x,t))+{\rm
div}_x\,A(\eps x)|\phi_\eps^j(x,t)|^2.
\end{align*}
Moreover, using \eqref{problema} and taking into account the
definition of $L_A$, we get
$$\begin{aligned}
\frac{\partial |\phi_{\eps}^j|^{2}}{\partial t}(x,t)& =
2\im\left(\bar\phi_{\eps}^j(x,t)
\big(L_A\phi_\eps^j(x,t)
+V(\eps x)\phi_\eps^j(x,t)-|\phi_\eps(x,t)|^{2p}_j\phi_\eps^j(x,t)\right.\\
& \qquad
\left.-\Phi(\eps x)*|\phi_\eps|^2_j\phi_\eps^j(x,t)\big)\right)\\
& =-\im\big(\bar\phi_\eps^j(x,t)\Delta\phi_\eps^j(x,t)) +2 A(\eps
x)\cdot\re\big(\phi_\eps^j(x,t)\nabla\bar\phi_\eps^j(x,t)\big)
+{\rm div}_x\,A(\eps x)|\phi_\eps^j(x,t)|^2,
\end{aligned}$$
so that identity~\eqref{identitafiepsilon} holds true.
Now let us prove the second one. By definition of the total
magnetic momentum $q_\eps^A$, for any $\ell=1,\dots, N$, we have
$$\begin{aligned}
\frac{\partial (q_\eps^A)_\ell}{\partial t} =\sum_{j=1}^m
\frac{\partial (p_{\eps}^A)_\ell^j}{\partial t}
&=\sum_{j=1}^m\left(\im(\partial_t\bar{\phi}_\eps^j\partial_\ell\phi_\eps^j)
+\im(\partial_\ell(\phi_\eps^j\partial_t\phi_\eps^j))\right)
-\sum_{j=1}^m\im(\partial_\ell\bar{\phi}_\eps^j\partial_t\phi_\eps^j)
-A_\ell(\eps x)\sum_{j=1}^m\frac{\partial
|\phi_\eps^j|^2}{\partial
t}\\
& =
2\sum_{j=1}^m\im(\partial_t\bar{\phi}_\eps^j\partial_\ell\phi_\eps^j)
-\sum_{j=1}^m\im\left(\partial_\ell(\bar{\phi}_\eps^j\partial_t\phi_\eps^j)\right)
-A_\ell(\eps x)\sum_{j=1}^m\frac{\partial
|\phi_\eps^j|^2}{\partial t},
\end{aligned}$$
and so, integrating over $\R^N$, it is easy to see that
\begin{equation}
    \label{qepsA}
\int \frac{\partial (q_\eps^A)_\ell}{\partial t} dx
=2\sum_{j=1}^m\int
\im(\partial_t\bar{\phi}_\eps^j\partial_\ell\phi_\eps^j)dx-\sum_{j=1}^m\int
\im\left(\partial_\ell(\bar{\phi}_\eps^j\partial_t\phi_\eps^j)\right)dx
  -\sum_{j=1}^m\int A_\ell(\eps x)\frac{\partial
|\phi_\eps^j|^2}{\partial t}dx.
\end{equation}
Let us consider the first term in the right-hand side of
\eqref{qepsA}. Conjugating the equation, multiplying it by
$2i\partial_\ell\phi_\eps^j$, $\ell=1, \dots, N$, and taking the
imaginary part, we have
$$\begin{aligned} 2
\im(\partial_{t}\bar{\phi_{\eps}}^j\partial_\ell\phi_{\eps}^j)
&=-\re(\Delta \bar{\phi_{\eps}}^j\partial_\ell\phi_{\eps}^j)
+2A(\eps x)\cdot\im(\nabla
\bar\phi_\eps^j\partial_\ell\phi_\eps^j)
+|A(\eps x)|^2\re(\bar{\phi_{\eps}}^j\partial_\ell\phi_{\eps}^j)  \\
& \qquad +{\rm div}_xA(\eps
x)\im(\bar\phi_\eps^j\partial_\ell\phi_\eps^j)+
2V(\eps x)\re(\bar{\phi_{\eps}}^j\partial_\ell\phi_{\eps}^j)\\
& \qquad
-2\re(|\phi_{\eps}|^{2p}_j\bar{\phi_{\eps}}^j\partial_\ell\phi_{\eps}^j)
- 2\re\left((\Phi(\eps x)*|\phi_\eps|_j^2)\bar \phi_\eps^j\partial_\ell\phi_\eps^j\right)\\
&=-\sum_{i=1}^{m}\re\left(\partial_i(\partial_i
\bar{\phi_{\eps}}^j\partial_\ell\phi_{\eps}^j)\right)
+\sum_{i=1}^{m}\partial_\ell\left(\frac{|\partial_i\phi_\eps^j|^2}{2}\right)\\
& \qquad +2A(\eps x)\cdot\im(\nabla
\bar\phi_\eps^j\partial_\ell\phi_\eps^j)
+|A(\eps x)|^2\re(\bar{\phi_{\eps}}^j\partial_\ell\phi_{\eps}^j)  \\
& \qquad +{\rm div}_xA(\eps
x)\im(\bar\phi_\eps^j\partial_\ell\phi_\eps^j)+
\partial_\ell\left(V(\eps x)|\phi_{\eps}^j|^2\right)
-\eps \partial_\ell V(\eps x)|\phi_{\eps}^j|^2\\
& \qquad
-\frac{\alpha_j}{p+1}\partial_\ell\big(|\phi_{\eps}^j|^{2p+2}\big)
-2\sum_{i=1, i\not =
j}^m\gamma_{ij}|\phi_\eps^i|^{p+1}|\phi_\eps^j|^{p-1}
\re(\bar{\phi_{\eps}}^j\partial_\ell\phi_{\eps}^j)\\
& \qquad - 2\beta_j \re\left((\Phi(\eps x)*|\phi_\eps^j|^2)\bar
\phi_\eps^j\partial_\ell\phi_\eps^j\right)- 2\sum_{i=1, i\not =
j}^m\omega_{ij} \re\left((\Phi(\eps x)*|\phi_\eps^i|^2)\bar
\phi_\eps^j\partial_\ell\phi_\eps^j\right).
\end{aligned}$$
Hence, integrating over $\R^N$ and using the
$H^2$-regularity of the functions involved, for all $\ell=1, \dots,
N$ we obtain the following identity
$$\begin{aligned}
2 \int
\im(\partial_{t}\bar{\phi_{\eps}}^j\partial_\ell\phi_{\eps}^j)dx
&=2\int A(\eps x)\cdot\im(\nabla
\bar\phi_\eps^j\partial_\ell\phi_\eps^j)dx
+\int|A(\eps x)|^2\re(\bar{\phi_{\eps}}^j\partial_\ell\phi_{\eps}^j)dx  \\
& \qquad +\int{\rm div}_xA(\eps
x)\im(\bar\phi_\eps^j\partial_\ell\phi_\eps^j)dx
-\eps \int\partial_\ell V(\eps x)|\phi_{\eps}^j|^2dx\\
& \qquad -2\sum_{i=1, i\not = j}^m\gamma_{ij}
\int |\phi_\eps^i|^{p+1}|\phi_\eps^j|^{p-1}\re(\bar{\phi_{\eps}}^j\partial_\ell\phi_{\eps}^j)dx\\
& \qquad - 2\beta_j \int\re\left((\Phi(\eps x)*|\phi_\eps^j|^2)\bar
\phi_\eps^j\partial_\ell\phi_\eps^j\right)dx\\
& \qquad - 2\sum_{i=1, i\not = j}^m\omega_{ij}
\int\re\left((\Phi(\eps x)*|\phi_\eps^i|^2)\bar
\phi_\eps^j\partial_\ell\phi_\eps^j\right)dx.
\end{aligned}$$
Notice that
$$
\int {\rm div}_xA(\eps
x)\im(\bar\phi_\eps^j\partial_\ell\phi_\eps^j)dx+ 2\!\!\int A(\eps
x)\cdot\im(\nabla \bar\phi_\eps^j\partial_\ell\phi_\eps^j)dx=\eps
\sum_{i=1}^m \int
\partial_\ell A_i(\eps x)\im\big(\bar\phi_\eps^j\partial_i\phi_\eps^j\big)dx.
$$
Moreover, thanks to the regularity of $\phi_\eps^j$, we have
$$
\begin{aligned}
\sum_{i,j=1,\, i\neq j}^{m}\int |\phi_\eps^i|^{p+1}|\phi_\eps^j|^{p-1}\re(\bar{\phi_{\eps}}^j\partial_\ell\phi_{\eps}^j)dx
& = \sum_{i,j=1,\, i\neq j}^{m}\int |\phi_\eps^i|^{p+1}|\phi_\eps^j|^{p-1}\partial_\ell\left(\frac{|\phi_\eps^j|^2}{2}\right)dx\\
& = \frac{1}{p+1}\sum_{i,j=1,\, i\neq j}^{m}\int |\phi_\eps^i|^{p+1}\partial_\ell\left(|\phi_\eps^j|^{p+1}\right)dx\\
& = \frac{1}{p+1}\sum_{i,j=1,\, i<j}^{m}\int
\partial_\ell\left(|\phi_\eps^i|^{p+1}|\phi_\eps^j|^{p+1}\right)dx=0,
\end{aligned}$$
and
$$\begin{aligned}
\int\re\left((\Phi(\eps x)*|\phi_\eps^j|^2)\bar
\phi_\eps^j\partial_\ell\phi_\eps^j\right)dx
& =\iint \Phi(\eps(x-y))|\phi_\eps^j(y)|^2\re\big(\bar\phi_\eps^j(x)\partial_\ell\phi_\eps^j(x)\big)dy dx\\
& = \iint \Phi(\eps(x-y))|\phi_\eps^j(y)|^2\partial_\ell\left(\frac{|\phi_\eps^j(x)|^2}{2}\right)dy dx\\
& =- \frac 1 2\iint
\eps\partial_\ell\Phi(\eps(x-y))|\phi_\eps^j(y)|^2|\phi_\eps^j(x)|^2dxdy
\end{aligned}$$
for all $\ell=1, \dots, N$. While, with the same arguments, we get
$$\begin{aligned}
\sum_{i=1, i\not =
j}^m\omega_{ij}\int\re\big((\Phi(\eps x)&*|\phi_\eps^i|^2)\bar
\phi_\eps^j\partial_\ell\phi_\eps^j\big)dx =- \frac 1 2\sum_{i=1,
i\not = j}^m\omega_{ij}\iint
\eps\partial_\ell\Phi(\eps(x-y))|\phi_\eps^i(y)|^2|\phi_\eps^j(x)|^2dxdy.
\end{aligned}$$
Hence, it is easy to see that
\begin{equation}\label{termine1}
\begin{aligned}
2 \int
\im(\partial_{t}\bar{\phi_{\eps}}^j \partial_\ell\phi_{\eps}^j)dx
& =  \sum_{i=1}^m \int
\eps\partial_\ell
A_i(\eps x)\im\big(\bar\phi_\eps^j\partial_i\phi_\eps^j\big)dx
+\int|A(\eps x)|^2\partial_\ell\left(\frac{|\phi_\eps^j|^2}{2}\right)dx\\
&  - \int \eps\partial_\ell V(\eps x)|\phi_\eps^j|^2dx -
\beta_j\iint
\eps\partial_\ell\Phi(\eps(x-y))|\phi_\eps^j(y)|^2|\phi_\eps^j(x)|^2dxdy\\
&  -\sum_{i=1, i\not =j}^m\omega_{ij}\iint
\eps\partial_\ell\Phi(\eps(x-y))|\phi_\eps^i(y)|^2|\phi_\eps^j(x)|^2dxdy \\
& = \sum_{i=1}^m \int
\eps\partial_\ell
A_i(\eps x)\im\big(\bar\phi_\eps^j\partial_i\phi_\eps^j\big)dx
+\sum_{i=1}^m\int \eps A_i(\eps x)\partial_\ell A_i(\eps x)|\phi_\eps^j|^2dx\\
&  -  \int \eps\partial_\ell V(\eps x)|\phi_\eps^j|^2dx-
\beta_j\iint
\eps\partial_\ell\Phi(\eps(x-y))|\phi_\eps^j(y)|^2|\phi_\eps^j(x)|^2dxdy\\
&  -\sum_{i=1, i\not =j}^m\omega_{ij}\iint
\eps\partial_\ell\Phi(\eps(x-y))|\phi_\eps^i(y)|^2|\phi_\eps^j(x)|^2dxdy
\end{aligned}
\end{equation}
for all $\ell=1, \dots, N$. As for the second term in~\eqref{qepsA},
using again the regularity of $\phi_\eps^j$, we get
$\int \im\left(\partial_\ell(\bar{\phi}_\eps^j\partial_t\phi_\eps^j)\right)dx=0$,
for any $\ell=1, \dots, N$. Finally, as for the third
term in the right-hand side of~\eqref{qepsA}, by \eqref{identitafiepsilon} we get
\begin{equation}
    \label{termine3}
\begin{aligned}
\int A_\ell(\eps x)\frac{\partial |\phi_\eps^j|^2}{\partial
t}(x,t)dx &=-\int A_\ell(\eps x){\rm div}_{x}\,
(p_{\eps}^A)^j(x,t)dx \\
& =  \sum_{i=1}^{m}\int \eps\partial_i A_\ell(\eps x) (p_{\eps}^A)^j_i(x,t)dx\\
& =  \sum_{i=1}^{m}\int \eps\partial_i A_\ell(\eps x)
\im(\bar\phi_{\eps}^j(x, t)\left(\partial_i
\phi_{\eps}^j(x,t)-iA_i(\eps x)\phi_{\eps}^j(x,t))\right)\\
& = \int \sum_{i=1}^m \eps\partial_i A_\ell(\eps x)
\im\big(\bar\phi_\eps^j(x,t)\partial_i\phi_\eps^j(x,t)\big)dx \\
&  -\int \sum_{i=1}^m \eps A_i(\eps x)\partial_i A_\ell(\eps
x)|\phi_\eps^j(x,t)|^2dx
\end{aligned}
\end{equation}
for any $\ell=1, \dots, N$.
Then \eqref{qepsA}-\eqref{termine3} yield
$$\begin{aligned}
\int \frac{\partial (q_\eps^A)_\ell}{\partial t}(x,t) dx & =
\sum_{i,j=1}^m \int \eps\left(\partial_\ell A_i(\eps x)-\partial_i
A_\ell(\eps x)\right)\im\big(\bar\phi_\eps^j\partial_i\phi_\eps^j\big)dx\\
& \qquad + \sum_{i,j=1}^m\int \eps A_i(\eps x)\left(\partial_\ell
A_i(\eps x)-\partial_i A_\ell(\eps x)\right)
|\phi_\eps^j|^2dx\\
& \qquad - \sum_{j=1}^m\int \eps\partial_\ell V(\eps
x)|\phi_\eps^j|^2dx- \sum_{j=1}^m\beta_j\iint
\eps\partial_\ell\Phi(\eps(x-y))|\phi_\eps^j(y)|^2|\phi_\eps^j(x)|^2dxdy\\
& \qquad -\sum_{i=1, i\not =j}^m\omega_{ij}\iint
\eps\partial_\ell\Phi(\eps(x-y))|\phi_\eps^i(y)|^2|\phi_\eps^j(x)|^2dxdy\\
& = -\int \left(q_{\eps}^A(x,t)\times \eps B(\eps x)\right)_\ell dx
-  \int \eps\partial_\ell V(\eps x)|\phi_\eps|^2dx\\
& \qquad - \sum_{j=1}^m\beta_j\iint
\eps\partial_\ell\Phi(\eps(x-y))|\phi_\eps^j(y)|^2|\phi_\eps^j(x)|^2dxdy\\
& \qquad -\sum_{i=1, i\not =j}^m\omega_{ij}\iint
\eps\partial_\ell\Phi(\eps(x-y))|\phi_\eps^i(y)|^2|\phi_\eps^j(x)|^2dxdy
\end{aligned}$$
for any $\ell=1, \dots, N$, so that \eqref{identitamomento} is proved.
\end{proof}

\begin{remark}
Taking into account the definition of $q_{\eps}^A$, by
\eqref{identitafiepsilon} easily follows
$$\frac{\partial
|\phi_{\eps}|^{2}}{\partial t}(x,t)=-{\rm div}_{x}\,
q_{\eps}^A(x,t),\quad x\in\R^N,\, t\in\R^+,$$
which is consistent
with the conservation's laws for the nonlinear Schr\"odinger
equation.
\end{remark}

We now give some estimates on the momentum $p_{\eps}^A$
and the total magnetic momentum $q_{\eps}^A$ related to
problem~\eqref{problema}.

\begin{lemma}\label{tec1}
Let $\phi_\eps$ be the solution of problem~\eqref{problema}
corresponding to the initial datum~\eqref{initialD} and let $(x_\eps(t),
\xi_\eps(t))$ be the global solution to~\eqref{DriveS}. Then, in the notational framework
of Theorem~\ref{chiave}, there exist $\eps_0>0$ and $C>0$ such that
\begin{equation*}
\big\| |\phi_\eps^j(x,t)|^2dx-m_j\delta_{y_{\eps}(t)}\big\|_{C^{2*}}
+
\big\|q_{\eps}^{A(\eps x_\eps(t))}(x,t)dx-M\xi_\eps(t)\delta_{y_{\eps}(t)}\big\|_{C^{2*}}\leq
C\Omega_\eps(t)+{\mathcal O}(\eps^2),
\end{equation*}
for every $t\in[0,T_{\eps}^{*})$ and $\eps\in(0,\eps_{0})$ and for
all $j=1, \dots, m$, where $T_\eps^*$ is given in~\eqref{Tepsdef}.
\end{lemma}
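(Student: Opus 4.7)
The plan is to invoke Theorem~\ref{chiave} to write, for each $j=1,\dots,m$, $\phi_\eps^j = \psi_j + \omega_\eps^j$, where $\psi_j(x,t) = e^{\iu\Theta_j(x,t)}\,r_j(x-y_\eps(t))$ with $\Theta_j(x,t) = \xi_\eps(t)\cdot x + \theta_\eps^j(t) + A(\eps x_\eps(t))\cdot(x-x_\eps(t))$, and $\|\omega_\eps^j\|_{H^1}^2 \le C\Omega_\eps(t) + \mathcal{O}(\eps^2)$ after absorbing the mixed $\sqrt{\Omega_\eps}\cdot\eps$ term of Theorem~\ref{chiave} by Young's inequality. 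Both norms in the lemma are then tested against arbitrary $\varphi\in C^2$ with $\|\varphi\|_{C^2}\le 1$ (scalar for the density, $\R^N$-valued for $q_\eps^{A(\eps x_\eps(t))}$), and the task is to bound each pairing by $C\Omega_\eps(t) + \mathcal{O}(\eps^2)$.

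For the density I would expand $|\phi_\eps^j|^2 = r_j^2(x-y_\eps) + 2\re(\bar\psi_j\omega_\eps^j) + |\omega_\eps^j|^2$. The leading term paired with $\varphi(\eps\cdot)$ produces $m_j\varphi(\eps y_\eps) + \mathcal{O}(\eps^2)$ via a translation and Lemma~\ref{pote}, and the quadratic term is trivially bounded by $\|\varphi\|_\infty\|\omega_\eps^j\|_{L^2}^2 \le C\Omega_\eps + \mathcal{O}(\eps^2)$. The critical linear cross term is split via $\varphi(\eps x) = \varphi(\eps y_\eps) + \mathcal{O}(\eps|x-y_\eps|)$: the constant piece collapses by the mass-conservation identity
\[
2\re\int \bar\psi_j\omega_\eps^j\,dx = \|\phi_\eps^j\|_{L^2}^2 - \|\psi_j\|_{L^2}^2 - \|\omega_\eps^j\|_{L^2}^2 = -\|\omega_\eps^j\|_{L^2}^2,
\]
both functions carrying $L^2$-mass $m_j$ by construction; the Taylor remainder is bounded by $C\eps\,\||x|r_j\|_{L^2}\|\omega_\eps^j\|_{L^2} \le C\eps\sqrt{\Omega_\eps+\eps^2}$ and absorbed into $C\Omega_\eps+\mathcal{O}(\eps^2)$ by Young, using the admissibility moment $|x|r_j \in L^2$ from Definition~\ref{defadmissibilityy}.

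For the total magnetic momentum the same pattern applies. The crucial algebraic fact is $\nabla\Theta_j = \xi_\eps(t) + A(\eps x_\eps(t))$, giving $\im(\bar\psi_j\nabla\psi_j) = r_j^2(x-y_\eps)[\xi_\eps(t)+A(\eps x_\eps(t))]$; after subtracting $A(\eps x_\eps(t))|\psi_j|^2$ the leading contribution to $(p_\eps^{A(\eps x_\eps(t))})^j$ is $\xi_\eps(t)r_j^2(x-y_\eps)$, which sums to $M\xi_\eps(t)\cdot\varphi(\eps y_\eps)+\mathcal{O}(\eps^2)$ by Lemma~\ref{pote}. Quadratic pieces $\im(\bar\omega_\eps^j\nabla\omega_\eps^j)$ and $A(\eps x_\eps(t))|\omega_\eps^j|^2$ are $\mathcal{O}(\|\omega_\eps^j\|_{H^1}^2)$. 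Using $\partial_k\bar\psi_j = -\iu[\xi_\eps^k+A_k(\eps x_\eps(t))]\bar\psi_j + e^{-\iu\Theta_j}\partial_k r_j(x-y_\eps)$ and the identity $\im(\bar\psi_j\partial_k\omega_\eps^j) + \im(\bar\omega_\eps^j\partial_k\psi_j) = \partial_k\im(\bar\psi_j\omega_\eps^j) - 2\im(\partial_k\bar\psi_j\,\omega_\eps^j)$, together with one integration by parts, the linear cross term splits into (a) pieces proportional to $\re(\bar\psi_j\omega_\eps^j)$, dispatched as in the density argument using the $x$-constancy of $\xi_\eps+A(\eps x_\eps(t))$, and (b) the projection $\im\int\varphi(\eps x)\cdot e^{-\iu\Theta_j}\nabla r_j(x-y_\eps)\,\omega_\eps^j\,dx$.

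The main obstacle is controlling (b) to order $\Omega_\eps+\eps^2$ rather than the naive $\sqrt{\Omega_\eps+\eps^2}$ coming from Cauchy--Schwarz. After Taylor-expanding $\varphi$ about $\eps y_\eps$ (whose $\mathcal{O}(\eps)$ remainder is again absorbed by Young), the problem reduces to pinning down the single vector $\im\int e^{-\iu\Theta_j}\nabla r_j(x-y_\eps)\,\omega_\eps^j\,dx$. I would extract this from two ingredients: first, the modulation orthogonality conditions intrinsic to the construction of Theorem~\ref{chiave} (the Euler--Lagrange relations that define $y_\eps(t)$ and $\theta_\eps^j(t)$ as the optimal modulation parameters), and second, the built-in controls $|\int\Pi^1_\eps\,dx|$ and $\rho_\eps^A$ that are already part of $\Omega_\eps$ and measure the integrated magnetic momentum up to size $\Omega_\eps$. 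A final Taylor step replacing $A(\eps x)$ by $A(\eps x_\eps(t))$ on the soliton support, using assumption $\mathbf{(A)}$ and Lemma~\ref{pote}, then closes the estimate for $q_\eps^{A(\eps x_\eps(t))}$.
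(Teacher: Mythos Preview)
Your direct-decomposition strategy via Theorem~\ref{chiave} is workable, but it is a genuinely different route from the paper's. The paper does not expand $\phi_\eps^j=\psi_j+\omega_\eps^j$ and chase cross-terms. Instead it works with the gauge-shifted function $\psi_\eps$ of~\eqref{veps} and uses the pointwise identity
\[
|\nabla v|^{2}=|\nabla |v||^{2}+\frac{|\im(\bar v\nabla v)|^{2}}{|v|^{2}}
\]
to split ${\mathcal E}(\psi_\eps)={\mathcal E}(|\psi_\eps|)+\tfrac12\sum_j\int|\im(\bar\psi_\eps^j\nabla\psi_\eps^j)|^2/|\psi_\eps^j|^2$. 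Combining Lemma~\ref{lemmaOmegaepsilon} with the variational characterisation ${\mathcal E}(|\psi_\eps|)\geq{\mathcal E}(r)$ from~\eqref{variatcaract-r} immediately yields
\[
\int\frac{|\im(\bar\psi_\eps^j\nabla\psi_\eps^j)|^{2}}{|\psi_\eps^j|^{2}}\,dx\leq C\Omega_\eps(t)+{\mathcal O}(\eps^2),
\]
i.e.\ a weighted $L^2$ bound on the phase current of $\psi_\eps^j$ itself. This single inequality, together with the $H^1$-closeness of $|\psi_\eps|$ to a translate of $r$, drives both the density and the momentum estimates without ever isolating the projection onto $\nabla r_j$; the paper then simply defers to \cite[Lemma~6.1]{squa}.

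In your approach, by contrast, the obstacle you flag at step~(b) is real, but your proposed fix via modulation orthogonality is off target: the Euler--Lagrange conditions for $y_\eps$ in the infimum defining $\Gamma_{\psi_\eps}$ constrain the \emph{real} part $\sum_j\re\langle\tilde\omega_\eps^j,e^{\iu\theta_j}\nabla r_j\rangle$, not the imaginary part you need. What actually rescues the argument is your second ingredient: a short computation shows that the summed ``bad'' projection equals $\tfrac12\big(M\xi_\eps-\int q_\eps^{A(\eps x_\eps)}\big)+{\mathcal O}(\|\omega\|_{H^1}^2)$, and this in turn is controlled by $|\int\Pi^1_\eps|$ together with $\int\Pi^2_\eps A_\ell$ (hence by $\hat\Omega_\eps$), using assumption~{\bf(A)}. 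So your programme closes, but you should drop the appeal to orthogonality and make the $\Pi^1_\eps/\Pi^2_\eps$ reduction explicit. The trade-off: the paper's route is shorter and leans on the ground-state minimality~\eqref{variatcaract-r}; yours is more hands-on, avoids that variational input, but needs the extra algebra above plus moment bounds like $|x|\,\nabla r_j\in L^2$ (true by exponential decay, though not listed in Definition~\ref{defadmissibilityy}).
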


\begin{proof}
For any $v\in H^{1}(\R^N)$, we have the formula $|\nabla
|v||^{2}=|\nabla v|^{2}-\frac{|\im(\bar v\nabla
v)|^{2}}{|v|^{2}}$. Then, by virtue of
Lemma~\ref{lemmaOmegaepsilon}, it follows that
\begin{equation*}
0\leq \mathcal E(|\psi_{\eps}|) -\mathcal E(r) +
\frac{1}{2}\sum_{j=1}^m\int\frac{|\im(\bar \psi_{\eps}^j\nabla
\psi_{\eps}^j)|^{2}}{|\psi_{\eps}^j|^{2}}dx\leq
C\Omega_\eps(t)+{\mathcal O}(\eps^2),
\end{equation*}
for all $t\in \R^+$ and $\eps>0$. Moreover, since
$\||\psi_{\eps}^j|\|_{L^2}=\|r_j\|_{L^2}$ for all $j=1, \dots, m$
and $\mathcal E(|\psi_{\eps}|)\geq \mathcal E(r)$
by means of~\eqref{variatcaract-r}, we have
\begin{equation}\label{dis1}
    \int\frac{|\im(\bar \psi_{\eps}^j\nabla \psi_{\eps}^j)|^{2}}{|\psi_{\eps}^j|^{2}}dx\leq C\Omega_\eps(t)+{\mathcal O}(\eps^2),
\end{equation}
for every $t\in \R^+$ and $\eps>0$ and for all $j=1, \dots, m$.
Following the blueprint of~\cite[Lemma~6.1]{squa}, we get the assertion
(see also \cite{squamont}).
\end{proof}

\vskip4pt
\noindent
Lemma~\ref{tec1} allows us to prove the following result
on the distance between the point $y_{\eps}(t)$ found
out in Theorem~\ref{chiave} and the trajectory $x_\eps(t)$. For the proof,
follow the blueprint of~\cite[Lemma~6.3]{squa}.

\begin{lemma}\label{lemmaT0}
    In the notational framework of Theorem~\ref{chiave} there exist $\eps_0>0$ and $T_0>0$ (cf.~the
    definition of $T_{\eps}^{*}=T_{\eps}^{*}(T_0)$) such that
$$
\big\|\delta_{x_\eps(t)}-\delta_{y_\eps(t)}\big\|_{C^{2*}}\leq
C|\eps x_\eps(t)-\eps y_{\eps}(t)|\leq C\Omega_\eps(t)+{\mathcal O}(\eps^2),
$$
for all $\eps\in(0,\eps_{0})$, $t\in [0,T_{\eps}^{*})$,
where $T_{\eps}^{*}$ defined as in \eqref{Tepsdef}.
\end{lemma}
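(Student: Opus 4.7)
The statement splits into two inequalities. The \emph{first} one, $\|\delta_{x_\eps(t)}-\delta_{y_\eps(t)}\|_{C^{2*}}\le C|\eps x_\eps(t)-\eps y_\eps(t)|$, is simply the upper-Lipschitz half of the equivalence $K_1|\eps y-\eps z|\le\|\delta_y-\delta_z\|_{C^{2*}}\le K_2|\eps y-\eps z|$ which was already cited from~\cite{Keerani2} in the proof of Lemma~\ref{lemmaOmegaepsilon}; no smallness is required for the upper half, so this inequality is free.

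For the \emph{second} (substantive) inequality, I would extract $|\eps x_\eps(t)-\eps y_\eps(t)|$ as a first moment of the density. Summing the density bound in Lemma~\ref{tec1} over $j$ yields
\[
\bigl\||\phi_\eps(\cdot,t)|^{2}\,dx-M\delta_{y_\eps(t)}\bigr\|_{C^{2*}}\;\le\;C\,\Omega_\eps(t)+{\mathcal O}(\eps^{2}),
\]
and I would test this inequality against the $C^{2}$ vector-valued function $\varphi(x)=x\,\chi(x)$, using the very same cutoff $\chi$ introduced in the definition of $\gamma_\eps(t)$. By the definition of the $C^{2*}$-norm this gives, componentwise,
\[
\Bigl|\eps\!\int x_{\ell}\,\chi(\eps x)|\phi_\eps(x,t)|^{2}dx\;-\;M(\eps y_\eps(t))_{\ell}\,\chi(\eps y_\eps(t))\Bigr|\le C\,\Omega_\eps(t)+{\mathcal O}(\eps^{2}),\qquad \ell=1,\dots,N.
\]
Now I would add and subtract $M\eps x_\eps(t)$ and exploit the very definition of $\gamma_\eps(t)$ to obtain, by the triangle inequality,
\[
M\bigl|\eps x_\eps(t)-\eps y_\eps(t)\,\chi(\eps y_\eps(t))\bigr|\le|\gamma_\eps(t)|+C\,\Omega_\eps(t)+{\mathcal O}(\eps^{2})\le C\,\Omega_\eps(t)+{\mathcal O}(\eps^{2}),
\]
because $|\gamma_\eps(t)|$ is one of the summands defining $\hat\Omega_\eps(t)\le\Omega_\eps(t)$.

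The main (and really only) obstacle is a bootstrap to get rid of the factor $\chi(\eps y_\eps(t))$. One fixes $T_0>0$ so that the trajectory $\{\eps x_\eps(t):\,t\in[0,T_0/\eps]\}$ stays inside a compact set, then picks the parameter $\sigma_{0}$ governing $T_\eps^*=T_\eps^*(T_0)$ small enough that, on $[0,T_\eps^*)$, the preliminary bound $|\eps x_\eps(t)-\eps y_\eps(t)|\le C\sigma_{0}+{\mathcal O}(\eps^{2})$ (read off from the display above with $\chi$ still present) forces $\eps y_\eps(t)$ to lie in the region $\{\chi\equiv 1\}$; the radius $\tilde\rho$ in the construction of $\chi$ was set in the proof of Lemma~\ref{lemmaOmegaepsilon} precisely with room for this. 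Once $\chi(\eps y_\eps(t))=1$, the previous display closes to $|\eps x_\eps(t)-\eps y_\eps(t)|\le C\,\Omega_\eps(t)+{\mathcal O}(\eps^{2})$, which is the desired second inequality. The whole argument is a streamlined adaptation of the blueprint in~\cite[Lemma~6.3]{squa}, the only novelty being the presence of the magnetic gauge, which has already been absorbed at the level of Lemma~\ref{tec1}.
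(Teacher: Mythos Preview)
Your overall strategy is the one the paper has in mind (it simply sends the reader to \cite[Lemma~6.3]{squa}), and your handling of the first inequality and of the $\gamma_\eps$ computation is correct. There is, however, one genuine soft spot in the bootstrap paragraph that you should tighten.

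You write that ``the preliminary bound $|\eps x_\eps(t)-\eps y_\eps(t)|\le C\sigma_0+{\mathcal O}(\eps^2)$ (read off from the display above with $\chi$ still present) forces $\eps y_\eps(t)$ to lie in $\{\chi\equiv 1\}$.'' But the display you derived only controls
\[
\bigl|\eps x_\eps(t)-\eps y_\eps(t)\,\chi(\eps y_\eps(t))\bigr|,
\]
not $|\eps x_\eps(t)-\eps y_\eps(t)|$, and these coincide \emph{only after} one knows $\chi(\eps y_\eps(t))=1$. So as written the step is circular: if $\eps y_\eps(t)$ happened to sit in the annulus $\{\tilde\rho<|z|<2\tilde\rho\}$ where $0<\chi<1$, the quantity $\eps y_\eps(t)\chi(\eps y_\eps(t))$ could be close to $\eps x_\eps(t)$ while $\eps y_\eps(t)$ itself is not. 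A continuity-in-$t$ argument does not obviously save this, since $y_\eps$ is only asserted to be locally bounded in Theorem~\ref{chiave}.

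The clean way to break the circle (and the reason the paper records the Keraani constants $K_0,K_1$ in the very definition of $\tilde\rho$) is to first combine the $\Pi^2_\eps$ piece of $\hat\Omega_\eps$ with Lemma~\ref{tec1}. For every $\varphi$ with $\|\varphi\|_{C^3}\le 1$ (hence $\|\varphi\|_{C^2}\le 1$) one has both
\[
\Bigl|\int\varphi(\eps x)|\phi_\eps|^2dx-M\varphi(\eps x_\eps(t))\Bigr|\le\Omega_\eps(t),\qquad
\Bigl|\int\varphi(\eps x)|\phi_\eps|^2dx-M\varphi(\eps y_\eps(t))\Bigr|\le C\Omega_\eps(t)+{\mathcal O}(\eps^2),
\]
so by the triangle inequality $M\|\delta_{x_\eps(t)}-\delta_{y_\eps(t)}\|_{C^{3*}}\le C\Omega_\eps(t)+{\mathcal O}(\eps^2)\le C\sigma_0+{\mathcal O}(\eps^2)$. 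Choosing $\sigma_0$ and $\eps_0$ small puts this below the Keraani threshold, and the \emph{lower} Keraani inequality then yields the preliminary bound $|\eps x_\eps(t)-\eps y_\eps(t)|\le C\Omega_\eps(t)+{\mathcal O}(\eps^2)$ directly, with no cutoff to remove. (Your $\gamma_\eps$ computation is then a legitimate alternative once this preliminary bound places $\eps y_\eps(t)$ inside $\{\chi\equiv1\}$.) This is the role that the choice $\tilde\rho=K_1\sup|\eps x_\eps|+K_0$ is set up to play, and it is what the blueprint in \cite{squa,Keerani2} actually does.
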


\noindent
Next, we state a strengthened version of
Lemma~\ref{tec1}, obtained thanks to Lemma~\ref{lemmaT0}. Follow
the blueprint of~\cite[Lemma~6.4]{squa} for a proof.

\begin{lemma}\label{lemmaT0+}
    Let $T_0$ be as in Lemma~\ref{lemmaT0}.
Let $\phi_{\eps}$ be the family of solutions to
problem~\eqref{problema} with initial datum~\eqref{initialD} and let
$(x_\eps(t), \xi_\eps(t))$ be the global solution of~\eqref{DriveS}.
Then there exist $\eps_{0}>0$ and $C>0$ such that
\begin{equation*}
\big\| |\phi_{\eps}^j(x,t)|^2dx-m_j\delta_{x_\eps(t)}\big\|_{C^{2*}}
+
\big\|q_{\eps}^{A}(x,t)dx-M\xi_\eps(t)\delta_{x_\eps(t)}\big\|_{C^{2*}}\leq
C\Omega_\eps(t)+{\mathcal O}(\eps^2),
\end{equation*}
for all $\eps\in(0,\eps_{0})$, $t\in [0,T_{\eps}^{*})$.

\noindent
In particular, by the definition
of $\Omega_\eps$, there exists $\delta>0$ with
\begin{equation}
	\label{secDDIn}
\big\| |\phi_{\eps}^j(x,t)|^2dx-m_j\delta_{x_\eps(t)}\big\|_{C^{2*}}
+
\big\|q_{\eps}^{A}(x,t)dx-M\xi_\eps(t)\delta_{x_\eps(t)}\big\|_{C^{2*}}\leq
C\hat\Omega_\eps(t)+{\mathcal O}(\eps^2),
\end{equation}
for all $\eps\in(0,\eps_{0})$ and $t\in [0,T_{\eps}^{*})$, provided that $\|A\|_{C^2}<\delta$.
\end{lemma}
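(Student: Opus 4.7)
The plan is to pass from the estimates of Lemma~\ref{tec1}, which are formulated with respect to the concentration point $y_\eps(t)$ of Theorem~\ref{chiave} and the ``frozen'' momentum $q^{A(\eps x_\eps(t))}_\eps$, to the asserted estimates formulated with the trajectory $x_\eps(t)$ and the full magnetic momentum $q^A_\eps$, by following the blueprint of~\cite[Lemma~6.4]{squa}. Two ingredients do the job: Lemma~\ref{lemmaT0}, to swap the Dirac mass at $y_\eps(t)$ with one at $x_\eps(t)$, and the pointwise algebraic identity
\[
q^A_\eps - q^{A(\eps x_\eps(t))}_\eps = \bigl[A(\eps x_\eps(t)) - A(\eps x)\bigr]|\phi_\eps|^2,
\]
which is a direct consequence of \eqref{paepsilon}.

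For the density part, for each $j=1,\dots,m$ I would split
\[
|\phi^j_\eps|^2\,dx - m_j\delta_{x_\eps(t)} = \bigl(|\phi^j_\eps|^2\,dx - m_j\delta_{y_\eps(t)}\bigr) + m_j\bigl(\delta_{y_\eps(t)} - \delta_{x_\eps(t)}\bigr),
\]
apply the triangle inequality in $C^{2*}$ and control the two pieces by Lemma~\ref{tec1} and Lemma~\ref{lemmaT0} respectively. For the momentum the analogous splitting is
\[
q^A_\eps - M\xi_\eps(t)\delta_{x_\eps(t)} = \bigl(q^{A(\eps x_\eps(t))}_\eps - M\xi_\eps(t)\delta_{y_\eps(t)}\bigr) + M\xi_\eps(t)\bigl(\delta_{y_\eps(t)} - \delta_{x_\eps(t)}\bigr) + \bigl[A(\eps x_\eps(t)) - A(\eps x)\bigr]|\phi_\eps|^2.
\]
The first two summands are treated as in the density case, using also that $|\xi_\eps(t)|$ is uniformly bounded by energy conservation (cf.\ Lemma~\ref{normBB}). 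For the last summand, setting $\widetilde A(y):=A(\eps x_\eps(t))-A(y)$ one has $\widetilde A(\eps x_\eps(t))=0$ and $\|\widetilde A\|_{C^2}\leq C\|A\|_{C^2}$; testing against a vector field $\varphi\in C^2(\R^N;\R^N)$ with $\|\varphi\|_{C^2}\leq 1$, the vanishing of $\widetilde A$ at $\eps x_\eps(t)$ turns the pairing into a $\Pi^2_\eps$-pairing with no boundary correction, and the outcome is either absorbed by the $\rho^A_\eps$ contribution in $\Omega_\eps$, giving the first (general) bound by $C\Omega_\eps(t)+\mathcal O(\eps^2)$, or extracted as a factor proportional to $\|A\|_{C^2}$ times the very momentum norm we are estimating, which is what is needed for the refined bound.

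For the refined estimate~\eqref{secDDIn} the argument closes by absorption: the full chain of inequalities takes the schematic form
\[
\|q^A_\eps - M\xi_\eps(t)\delta_{x_\eps(t)}\|_{C^{2*}} \leq C\hat\Omega_\eps(t) + C\|A\|_{C^2}\,\|q^A_\eps - M\xi_\eps(t)\delta_{x_\eps(t)}\|_{C^{2*}} + \mathcal O(\eps^2),
\]
so choosing $\delta$ small enough that $C\delta<1/2$ allows one to move the second summand on the right to the left, yielding~\eqref{secDDIn}. The main technical obstacle is precisely this last bookkeeping step: one has to verify that every occurrence of $A$ in the correction from $q^{A(\eps x_\eps(t))}_\eps$ to $q^A_\eps$ factors out cleanly as a multiplicative $\|A\|_{C^2}$ contribution to the norm being estimated, with no residual uncontrolled term, so that the absorption under the smallness assumption $\|A\|_{C^2}<\delta$ is legitimate and the $\rho^A_\eps$ piece of $\Omega_\eps$ can indeed be dropped in favour of the smaller $\hat\Omega_\eps$.
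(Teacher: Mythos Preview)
Your overall approach is correct and is essentially what the paper intends (it merely refers to~\cite[Lemma~6.4]{squa}): combine Lemma~\ref{tec1} with Lemma~\ref{lemmaT0} via the triangle inequality, and pass from $q^{A(\eps x_\eps(t))}_\eps$ to $q^A_\eps$ using the algebraic identity you wrote.

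One piece of bookkeeping is slightly off and worth tightening. The third summand $\widetilde A|\phi_\eps|^2$ in your momentum split, when paired with $\varphi$, becomes a $\Pi^2_\eps$-pairing with test function $\varphi\cdot\widetilde A$; this gives a factor $\|A\|_{C^2}$ times the \emph{density} norm $\||\phi_\eps|^2-M\delta_{x_\eps(t)}\|_{C^{2*}}$ (already controlled in the first step), not the momentum norm. In particular it is not ``absorbed by the $\rho^A_\eps$ contribution,'' since $\rho^A_\eps$ is a $\Pi^1_\eps$-pairing, not a $\Pi^2_\eps$-pairing. For the refined estimate~\eqref{secDDIn} the absorption works instead through the direct observation
\[
\rho^A_\eps(t)=\Big|\int \Pi^1_\eps(x,t)\cdot A(x)\,dx\Big|\leq \|A\|_{C^2}\,\big\|q^A_\eps(\cdot,t)\,dx-M\xi_\eps(t)\delta_{x_\eps(t)}\big\|_{C^{2*}},
\]
applied to the already-proved first inequality written as $\Omega_\eps=\hat\Omega_\eps+\rho^A_\eps$. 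This is exactly your final schematic, just obtained from the right place; once you route the argument this way it closes cleanly. After absorbing in the momentum, plug the resulting bound on $\rho^A_\eps$ back into the density estimate to get the $\hat\Omega_\eps$-bound there as well.
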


\begin{remark}
In Lemma~\ref{lemmaT0+}, while the $C^{2*}$-norm control holds on  $\Pi_\eps^j=|\phi_{\eps}^j(x,t)|^2dx-m_j\delta_{x_\eps(t)}$
for each $j=1,\dots,m$, the control on the momentum holds for the total
magnetic momentum $q_{\eps}^{A}(x,t)$. This is in fact natural, since looking
at the second identity in Proposition~\ref{identita}, it is clear that
it cannot hold for each individual $(p_\eps^A)^j$, unless some other (disturbing)
integral terms are added to the formula.
\end{remark}

\section{Uniform estimation of $\Omega_\eps$}
\label{error-estimate}

Before proving the main result we give an estimate
showing that the quantity $\Omega_\eps(t)$ can be made small at
the order ${\mathcal O}(\eps^2)$, uniformly on finite
time intervals, as $\eps$ goes to zero.

\begin{lemma}\label{OOmega}
Let $T_0$ be as in Lemma~\ref{lemmaT0} and $\eps_0,\delta$ as in Lemma~\ref{lemmaT0+}. Then
there exists $C>0$ such that $\hat\Omega_\eps(t)\leq C\eps^2$, for all $\eps\in (0,\eps_0)$
and $t\in [0,T_{\eps}^{*})$.

\noindent
In addition, if we assume that $\|A\|_{C^2}<\delta$ for $\delta>0$ sufficiently enough,
then $\Omega_\eps(t)\leq C\eps^2$, for all $\eps\in (0,\eps_0)$
and $t\in [0,T_{\eps}^{*})$.
\end{lemma}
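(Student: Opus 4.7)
The plan is to derive a differential inequality of the form $\hat\Omega_\eps'(t)\le C\eps\,\hat\Omega_\eps(t)+C\eps^{3}$ on $[0,T_\eps^{*})$ and to close it by Gronwall's inequality, starting from the initial bound $\hat\Omega_\eps(0)=\mathcal O(\eps^{2})$ that was already established inside the proof of Lemma~\ref{lemmaOmegaepsilon}. Integrating such an inequality on a time interval of length $T_{0}/\eps$ keeps the right-hand side of order $\eps^{2}$, which is precisely the desired estimate. The key ingredients will be the PDE identities of Proposition~\ref{identita}, the ODE system~\eqref{DriveS}, and the $C^{2*}$-estimates of Lemma~\ref{lemmaT0+}, which translate dual-norm bounds back into bounds on $\hat\Omega_\eps$ itself.

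Concretely, I would differentiate in time each of the three pieces defining $\hat\Omega_\eps$. For $\int\Pi^{1}_\eps(x,t)\,dx=\int q^{A}_\eps\,dx-M\xi_\eps(t)$, Proposition~\ref{identita} gives $\partial_{t}\!\int q^{A}_\eps\,dx$ in terms of $-\int q^{A}_\eps\times\eps B(\eps x)\,dx$, $-\int\eps\nabla V(\eps x)|\phi_\eps|^{2}\,dx$ and nonlocal contributions, while~\eqref{DriveS} gives $-M\dot\xi_\eps=M\eps\nabla V(\eps x_\eps)+M\eps\xi_\eps\times B(\eps x_\eps)$. Summing these, the local terms rearrange into $-\eps\int(q^{A}_\eps-M\xi_\eps\delta_{x_\eps(t)})\times B(\eps x)\,dx$ and $-\eps\int(|\phi_\eps|^{2}-M\delta_{x_\eps(t)})\nabla V(\eps x)\,dx$, each of which by Lemma~\ref{lemmaT0+} and assumptions (V),(A) is bounded by $\eps(C\hat\Omega_\eps+\mathcal O(\eps^{2}))$; the nonlocal pieces are controlled analogously via Lemma~\ref{potePhi}. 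An entirely parallel computation for $\int\Pi^{2}_\eps(x,t)\varphi(x)\,dx$, using the continuity equation $\partial_{t}|\phi_\eps|^{2}=-\mathrm{div}\,q^{A}_\eps$ and integrating by parts, produces $\eps\int\nabla\varphi(\eps x)\cdot q^{A}_\eps\,dx-M\eps\xi_\eps\cdot\nabla\varphi(\eps x_\eps)$, again of the order $\eps(C\hat\Omega_\eps+\mathcal O(\eps^{2}))$ uniformly in $\|\varphi\|_{C^{3}}\le 1$. Finally, for $\gamma_\eps$ one uses $\dot x_\eps=\xi_\eps$, integrates by parts against $\chi$ (whose support was chosen so that $\eps x_\eps(t)\in\{\chi=1\}$ for $t\in[0,T_{0}/\eps]$ by the selection of $\tilde\rho$), and obtains the same kind of $\eps$-small bound. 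Assembling the three contributions and applying Gronwall proves $\hat\Omega_\eps(t)\le C\eps^{2}$ on $[0,T_{\eps}^{*})$.

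For the full $\Omega_\eps=\hat\Omega_\eps+\rho_\eps^{A}$ with the extra term $\rho_\eps^{A}(t)=|\int\Pi^{1}_\eps(x,t)\cdot A(x)\,dx|$, the same procedure works once we differentiate $\int\Pi^{1}_\eps\cdot A\,dx$: by Proposition~\ref{identita} this produces terms involving $q_\eps^{A}\times\eps B$ contracted with $A$, plus gradient and nonlocal terms, whose estimation via Lemma~\ref{lemmaT0+} introduces an unavoidable prefactor of $\|A\|_{C^{2}}$ rather than $\eps$. Choosing $\|A\|_{C^{2}}<\delta$ small enough makes this prefactor smaller than the Gronwall threshold, so the resulting differential inequality still closes and one recovers $\Omega_\eps(t)\le C\eps^{2}$. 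The main obstacle is precisely here: without smallness of $\|A\|_{C^{2}}$ the extra magnetic coupling in $\rho_\eps^{A}$ contributes a term of size $C\|A\|_{C^{2}}\,\Omega_\eps$ to $\Omega_\eps'$, and Gronwall on $[0,T_{0}/\eps]$ would amplify it by $\exp(C\|A\|_{C^{2}} T_{0}/\eps)$, destroying the $\eps^{2}$ bound. The second (sharpened) conclusion~\eqref{secDDIn} of Lemma~\ref{lemmaT0+}, valid under the smallness hypothesis, is exactly what is needed to bypass this and close the loop.
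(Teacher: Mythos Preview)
Your argument for $\hat\Omega_\eps$ is essentially the paper's: differentiate each of the three pieces, use Proposition~\ref{identita} and~\eqref{DriveS} to reduce the derivatives to $C^{2*}$-dual pairings, invoke Lemma~\ref{lemmaT0+}, and close by Gronwall on $[0,T_0/\eps]$. Two points deserve correction.

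First, the nonlocal contribution cannot be handled ``via Lemma~\ref{potePhi}'': that lemma is an expansion for integrals against the \emph{ground state} $r$, not against $|\phi_\eps|^2$. What the paper actually does (for $m=1$; recall $\Phi\equiv0$ when $m>1$) is write, exactly as at the end of the proof of Lemma~\ref{lemmaOmegaepsilon},
\[
\int\nabla\Phi(\eps x-\eps y)|\phi_\eps^1(x)|^2\,dx
=\int\Pi^2_\eps(x,t)\,\nabla\Phi(x-\eps y)\,dx
+m_1\nabla\Phi(\eps x_\eps(t)-\eps y),
\]
and then iterate in the $y$-variable. Together with $\nabla\Phi(0)=0$ this bounds the whole nonlocal block by $\eps\bigl[C\hat\Omega_\eps(t)+\mathcal O(\eps^2)\bigr]$. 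So the mechanism is the $\Pi^2_\eps$-machinery, not Lemma~\ref{potePhi}.

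Second, for the passage from $\hat\Omega_\eps$ to $\Omega_\eps$ you take an unnecessary detour. There is no need to differentiate $\rho_\eps^A$ at all (and Proposition~\ref{identita} only gives the \emph{unweighted} integral of $\partial_t q^A_\eps$, so a weighted version would have to be derived separately). The paper's step is purely algebraic: by definition
\[
\rho_\eps^A(t)=\Bigl|\int A(\eps x)\cdot q^A_\eps(x,t)\,dx-M A(\eps x_\eps(t))\cdot\xi_\eps(t)\Bigr|
\le \|A\|_{C^2}\,\bigl\|q^A_\eps(x,t)\,dx-M\xi_\eps(t)\delta_{x_\eps(t)}\bigr\|_{C^{2*}}.
\]
Applying Lemma~\ref{lemmaT0+} gives $\rho_\eps^A(t)\le C\|A\|_{C^2}\bigl[\Omega_\eps(t)+\mathcal O(\eps^2)\bigr]$, hence
\[
\Omega_\eps(t)=\hat\Omega_\eps(t)+\rho_\eps^A(t)\le C\eps^2+C\|A\|_{C^2}\,\Omega_\eps(t)+\mathcal O(\eps^2),
\]
and for $\|A\|_{C^2}<\delta$ small one absorbs the $\Omega_\eps$-term on the right into the left. (Equivalently, one can invoke~\eqref{secDDIn} directly to replace $\Omega_\eps$ by $\hat\Omega_\eps$ in the $C^{2*}$-bound.) Your description of an obstruction ``prefactor $\|A\|_{C^2}$ rather than $\eps$'' in a differential inequality is therefore beside the point: the smallness of $A$ enters through this one-line absorption, not through Gronwall.
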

\begin{proof}
By the definition of $\Pi^1_\eps$, Lemma~\ref{lemmaT0+},
Proposition~\ref{identita} and system~\eqref{DriveS}, we obtain
\begin{align*}
\Big|\int \frac{d}{dt}\Pi^1_\eps(x,t)dx\Big| &
=\Big|\int \frac{\partial q^A_\eps}{\partial t}(x,t)dx-M\dot \xi_\eps(t)\Big| \\
& =\Big|\int q_{\eps}^A(x,t)\times \eps B(\eps x)dx+\int \eps\nabla V(\eps x)|\phi_{\eps}(x,t)|^{2}dx \\
& +\sum_{j=1}^m\beta_j\iint \eps\nabla\Phi(\eps(x-y))|\phi_\eps^j(y)|^2|\phi_\eps^j(x)|^2dxdy\\
& +\sum_{i,j=1, i\neq j}^m\omega_{ij}\iint \eps\nabla\Phi(\eps(x-y))|\phi_\eps^i(y)|^2|\phi_\eps^j(x)|^2dxdy\\
& -M\eps\nabla V(\eps x_\eps(t))-M\eps\xi_\eps(t)\times B(\eps x_\eps(t))\Big|.
\end{align*}
If $m>1$, we do not have to manage the nonlocal terms, since $\Phi\equiv 0$. If instead $m=1$,
recalling that $\nabla \Phi(0)=0$, by Lemma~\ref{lemmaT0+}
and arguing as at the end of the proof of Lemma~\ref{lemmaOmegaepsilon}, we get
\begin{equation}
	\label{estsmm0}
\Big|\iint \eps\nabla \Phi(\eps(x-y))|\phi_\eps^1(y)|^2|\phi_\eps^1(x)|^2dx dy\Big|
\leq \eps\big[ C\hat\Omega_\eps(t)+{\mathcal O}(\eps^2) \big],
\end{equation}
for some positive constant $C$,
for all $\eps\in(0,\eps_{0})$ and $t\in [0,T_{\eps}^{*})$. In turn, it holds
\begin{align}
	\label{estsmm1}
&   \Big|\int \frac{d}{dt}\Pi^1_\eps(x,t)dx\Big| \\
&\leq \Big|\int q_{\eps}^A(x,t)\times \eps B(\eps x)dx+\int \eps\nabla V(\eps x) |\phi_{\eps}(x,t)|^{2} dx  \notag \\
& -\int M\eps\nabla V(\eps x)\delta_{x_{\eps}(t)}dx-\int M\eps\xi_\eps(t)\times B(\eps x)\delta_{x_\eps(t)}dx\Big|
+ \eps\big[ C\hat\Omega_\eps(t)+{\mathcal O}(\eps^2) \big] \notag \\
&\leq  \eps\Big|\int \big(q_{\eps}^A(x,t)-M\xi_\eps(t)\delta_{x_\eps(t)}\big)\times  B(\eps x)dx\Big|  \notag \\
& +\eps\Big|\int \nabla V(\eps x)\big(|\phi_{\eps}(x,t)|^{2}-M\delta_{x_\eps (t)}\big)dx\Big|
+ \eps\big[ C\hat\Omega_\eps(t)+{\mathcal O}(\eps^2) \big] \notag \\
&\leq C\eps\big\|q_{\eps}^A(x,t)dx-M\xi_\eps(t)\delta_{x_\eps(t)}\big\|_{C^{2*}}
+C\eps\big\| |\phi_{\eps}(x,t)|^{2}dx-M\delta_{x_\eps(t)} \big\|_{C^{2*}}   \notag\\
\noalign{\vskip3pt}
& + \eps\big[ C\hat\Omega_\eps(t)+{\mathcal O}(\eps^2) \big]
\leq \eps\big[ C\hat\Omega_\eps(t)+{\mathcal O}(\eps^2) \big],  \notag
\end{align}
for all $\eps\in (0,\eps_0)$ and $t\in [0,T^*_\eps)$. Hence, recalling that
$\Omega_\eps(0)=\mathcal O(\eps^2)$ as $\eps$ goes to zero,
\begin{equation}\label{p1}
\left|\int \Pi^1_\eps(x,t)dx\right|
\leq\left|\int \Pi^1_\eps(x,0)dx\right|+\int_{0}^t
\left|\int\frac{d}{dt}\Pi^1_\eps(x,\tau)dx\right|d\tau
\leq C\eps^2(1+\eps t)+C\eps\int_{0}^t \hat\Omega_\eps(\tau)d\tau,
\end{equation}
for all $\eps\in (0,\eps_0)$ and $t\in [0,T^*_\eps)$.
Now, let $\varphi\in C^3(\R^N)$ such that $\|\varphi\|_{C^3}\leq 1$.
Again in light of Proposition~\ref{identita} and Lemma~\ref{lemmaT0+}, we have
\begin{align}
	\label{estsmm2}
\Big|\int \frac{d}{dt}\Pi^2_\eps(x,t)\varphi(x)dx\Big| &
=\Big|\int \varphi(\eps x)\frac{\partial}{\partial t}
|\phi_\eps(x,t)|^2dx-M\eps\nabla\varphi(\eps x_\eps(t))\cdot\xi_\eps(t)\Big| \\
& =\Big|- \int \varphi(\eps x)\,{\rm div}_{x}\, q_{\eps}^A(x,t)dx-M\eps\nabla\varphi(\eps x_\eps(t))\cdot\xi_\eps(t)\Big| \notag\\
& =\Big|\int \eps\nabla \varphi(\eps x)\cdot q_{\eps}^A(x,t)dx-
\int M\eps \nabla\varphi(\eps x)\cdot\xi_\eps(t)\delta_{x_\eps(t)}dx\Big|   \notag \\
& =\Big|\int \eps\nabla \varphi(\eps x)\cdot\big (q_{\eps}^A(x,t)-M\xi_\eps(t)\delta_{x_\eps(t)}\big)dx\Big|  \notag\\
&\leq C\eps\big\|q_{\eps}^A(x,t)dx-M\xi_\eps(t)\delta_{x_\eps(t)}\big\|_{C^{2*}}
\leq \eps\big[ C\hat\Omega_\eps(t)+\mathcal O(\eps^2)\big],
\notag
\end{align}
for all $\eps\in (0,\eps_0)$ and $t\in [0,T^*_\eps)$. Thus, arguing
as above, we get
\begin{equation}\label{p2}
\sup_{\|\varphi\|_{C^3}\leq 1}\Big|\int
\Pi^2_\eps(x,t)\varphi(x)dx\Big| \leq C\eps^2(1+\eps t)+C\eps\int_{0}^t \hat\Omega_\eps(\tau)d\tau,
\end{equation}
for all $\eps\in (0,\eps_0)$ and $t\in [0,T^*_\eps)$.
Finally, again via Proposition~\ref{identita} and Lemma~\ref{lemmaT0+}, we have
\begin{align}
	\label{estsmm3}
\big|\dot\gamma_{\eps}(t)\big| &
=\Big|M\eps\xi_\eps(t)+\int \eps x\chi(\eps x)\,{\rm div}_x q^A_\eps(x,t)dx\Big|  \\
&=\Big|M\eps\xi_\eps(t)-\int \nabla(\eps x\chi(\eps x))\cdot q^A_\eps(x,t)dx\Big|  \notag \\
&=\eps\Big|\int \nabla(x\chi(\eps x))M\xi_\eps(t)\delta_{x_\eps(t)}dx
-\int \nabla(x\chi(\eps x))\cdot q^A_\eps(x,t)dx\Big|  \notag \\
\noalign{\vskip2pt} &\leq \eps C\big\|q^A_\eps(x,t)dx-M\xi_\eps(t)\delta_{x_\eps (t)}\big\|_{C^{2*}}
\leq \eps\big[C\hat\Omega_\eps(t)+\mathcal O(\eps^2)\big], \notag
\end{align}
which implies
\begin{equation}
\label{p3} |\gamma_\eps(t)|\leq
C\eps^2(1+\eps t)+C\eps\int_0^t \hat\Omega_\eps(\tau)d\tau,
\end{equation}
for all $\eps\in (0,\eps_0)$ and $t\in [0,T^*_\eps)$. Collecting the above
inequalities, recalling the definition of
$\hat\Omega_\eps(t)$ and taking into account that, for $t<T^*_\eps$,
by the definition of $T_\eps^*$ it holds $\eps t< \eps T_\eps^*\leq T_0$, we get
\begin{equation*}
\hat\Omega_\eps(t) \leq C\eps^2(1+\eps t)+C\eps\int_{0}^t \hat\Omega_\eps(\tau) d\tau \\
 \leq C\eps^2+C\eps\int_{0}^t \hat\Omega_\eps(\tau) d\tau
\end{equation*}
for all $\eps\in (0,\eps_0)$ and $t\in [0,T^*_\eps)$. Hence, Gronwall Lemma
yields
$$
\hat\Omega_\eps(t)\leq C\eps^2 e^{\eps t}\leq C\eps^2,
$$
for all $\eps\in (0,\eps_0)$ and $t\in [0,T^*_\eps)$, which gives the assertion.
Finally, concerning the last assertion of the Lemma,
recalling again Lemma~\ref{lemmaT0+} and taking into account the definition of $\rho_\eps^A(t)$,
if $\|A\|_{C^2}<\delta$ for $\delta>0$ small enough, we conclude the proof.
\end{proof}

\medskip

\section{Proof of Theorem~\ref{th:main} completed}
\label{proof-section}

\subsection{First conclusion of Theorem~\ref{th:main}}
Let $T_0$ be as in Lemma~\ref{lemmaT0} and $\eps_0,\delta$ as in Lemma~\ref{lemmaT0+}. By Lemma~\ref{OOmega} and
the definition~\eqref{Tepsdef} it follows that $T^*_\eps=T_0/\eps$, for all $\eps\in
(0, \eps_0)$. Hence, $\Omega_\eps(t)\leq C\eps^2$
for all $\eps\in (0,\eps_0)$ and $t\in [0,T_0/\eps]$, in light of Lemma~\ref{OOmega}.
Moreover, by Theorem~\ref{chiave} there exist functions
$\theta^1_{\eps}, \dots, \theta^m_{\eps}:\R^+\to[0,2\pi)$ and $y_{\eps}:\R^+\to\R^N$ such that
$$
\phi_\eps^j(x,t)=e^{\iu (\xi_\eps(t)\cdot
x+\theta^j_\eps(t)+A(\eps x_\eps(t)) \cdot
(x-x_\eps(t))}r_j(x-y_\eps(t))+\omega^j_\eps(t),
$$
where $\|\omega^j_\eps(t)\|_{\H_{\eps}}\leq C\sqrt{\Omega_\eps(t)}+{\mathcal O}(\eps)$,
and hence, we have
$\|\omega^j_\eps(t)\|_{\H_{\eps}}\leq {\mathcal O}(\eps)$,
for all $\eps\in(0,\eps_{0})$, $t\in [0,T_0/\eps]$ and $j=1, \dots, m$.
Lemma~\ref{lemmaT0} and Lemma~\ref{OOmega} also yield
$|x_\eps(t)-y_{\eps}(t)|\leq {\mathcal O}(\eps)$,
for all $\eps\in(0,\eps_{0})$ and $t\in [0,T_0/\eps]$.
Finally, using {\bf (A)}, {\bf (V)} and~\eqref{Hamilt}, we get
\begin{equation}\label{3}
\begin{aligned}
\Big\|& e^{\iu(\xi_\eps(t)\cdot x+\theta^j_\eps(t)+A(\eps x_\eps(t))
\cdot (x-x_\eps(t))}\left(r_j(x-y_\eps(t))-
r_j(x-x_\eps(t))\right)\Big\|_{H^1}^2\\
& \leq \int\Big|\xi_\eps(t)+A(\eps x_\eps (t))\Big|^2\Big|r_j(x-y_\eps(t))-
r_j(x-x_\eps(t))\Big|^2dx\\
& +\int\Big|\nabla
r_j(x-y_\eps(t))-
\nabla r_j(x-x_\eps(t))\Big|^2 dx\\
& +\int\Big|r_j(x-y_\eps(t))-
r_j(x-x_\eps(t))\Big|^2dx
\leq C|x_\eps(t)-y_{\eps}(t)|^2\leq C{\mathcal O}(\eps^2),
\end{aligned}
\end{equation}
for all $\eps\in(0,\eps_{0})$, $t\in [0,T_0/\eps]$.
Therefore, it follows that
\begin{equation}
\Big\|\phi_\eps^j(x,t) -e^{\iu(\xi_\eps(t)\cdot
x+\theta^j_\eps(t)+A(\eps x_\eps(t)) \cdot
(x-x_\eps(t))}r_j(x-x_\eps(t))\Big\|_{H^1}^2\leq
\mathcal O(\eps^2),
\end{equation}
for all $\eps\in(0,\eps_{0})$, $t\in [0,T_0/\eps]$ and $j=1, \dots, m$.
Hence, Theorem~\ref{th:main} holds true in $[0,T_0/\eps]$.
Now, let us take $x_1^\eps=x_\eps(T_0/\eps)$ and $\xi_1=\xi_\eps(T_0/\eps)$ as new initial
datum in system~\eqref{DriveS} and the functions
$$
\phi_1^j(x)=r_j(x-x_1^\eps)
e^{\iu[A(\eps x_1^\eps)\cdot(x-x_1^\eps)+x\cdot\xi_1^\eps]},\quad
x\in\R^N,\,\, j=1,\dots,m
$$ as new initial datum for problem~\eqref{problema}. Arguing as above, we can show that
Theorem~\ref{th:main} holds true in $[T_0/\eps, 2T_0/\eps]$ and so, in any
finite time interval $[0,T/\eps]$, with $T>0$. The proof of Theorem~\ref{th:main} is now complete
under the assumption that $\|A\|_{C^2}<\delta$.

\subsection{Second conclusion of Theorem~\ref{th:main}}
To prove the second part of Theorem~\ref{th:main}, namely formula~\eqref{secondconcl-A},
we follow the argument of~\cite{selvit} (which is based upon the original
paper by Bronski-Jerrard~\cite{bronski}). Let us give a brief sketch of the proof.
Based upon the identity (see for instance~\cite[p.2571]{selvit}) holding for all $v\in H^1(\R^N)$
$$
\Big|\frac{\nabla v}{\iu}-A(\eps x)v\Big|^2=\frac{|p^{A(\eps x)}(v)|^2}{|v|^2}+|\nabla |v||^2,
\qquad
p^A(v):=\im\big(\bar v(\nabla v(x,t)-\iu A(\eps x)v(x,t))\big),
$$
the energy functional of the Schr\"odinger problem is rewritten as
$$
E_\eps(t)=E_\eps^{{\rm pot}}(t)+E_\eps^{{\rm b}}(t)+E_\eps^{{\rm k}}(t)+E_\eps^{{\rm nl}}(t),
$$
where we have set
\begin{align*}
E_\eps^{{\rm pot}}(t)&:=	\int V(\eps x)|\phi_\eps(x,t)|^2dx,    \\
E_\eps^{{\rm b}}(t) &:=\frac{1}{2}\sum_{j=1}^m\int 	|\nabla |\phi_\eps^j|(x,t)|^2
-\frac{1}{p+1}\sum_{j=1}^m\alpha_j\int |\phi_\eps^j(x,t)|^{2p+2}dx \\
& -\frac{1}{p+1}\sum_{i,j,\,i\neq j}^m \gamma_{ij}\int |\phi_\eps^i(x,t)|^{p+1}|\phi_\eps^j(x,t)|^{p+1}dx , \\
E_\eps^{{\rm k}}(t) &:=	\frac{1}{2}\sum_{j=1}^m\int \frac{|(p^{A(\eps x)}(x,t))^j|^2}{|\phi_\eps^j(x,t)|^2}dx, \\
E_\eps^{{\rm nl}}(t) &:=-\frac{1}{2}\sum_{j=1}^m\beta_j \iint \Phi(\eps(x-y))|\phi_\eps^j(x,t)|^2|\phi_\eps^j(y,t)|^2 dx dy \\
 &-\frac{1}{2}\sum_{i,j,\,i\neq j}^m\omega_{ij} \iint \Phi(\eps(x-y))|\phi_\eps^i(x,t)|^2|\phi_\eps^j(y,t)|^2 dx dy.	
\end{align*}
Notice that, with respect to our notations, we have $E_\eps^{{\rm b}}(r_1,\dots,r_m)={\mathcal E}(r_1,\dots,r_m)$
since $r_i$ are real valued and positive functions. Moreover
$E_\eps^{{\rm b}}(|\psi_\eps^1|,\dots,|\psi_\eps^m|)=E_\eps^{{\rm b}}(|\phi_\eps^1|,\dots,|\phi_\eps^m|)={\mathcal E}(|\phi_\eps^1|,\dots,|\phi_\eps^m|)$.
At this stage, keeping in mind that we possess Lemma~\ref{estUNO}, which expands the energy $E_\eps(t)$
up to an error ${\mathcal O}(\eps^2)$, by repeating the steps of the proof of~\cite[Lemma 3.5]{selvit},
it is readily seen that, as $\eps$ goes to zero,
\begin{equation*}
0\leq E_\eps^{{\rm b}}(|\phi_\eps^1|,\dots,|\phi_\eps^m|)-E_\eps^{{\rm b}}(r_1,\dots,r_m)\leq C\hat\Omega_\eps(t)+{\mathcal O}(\eps^2).
\end{equation*}
This conclusion plays the role of Lemma~\ref{lemmaOmegaepsilon} and, as a consequence,
by the non-degeneracy/energy convexity property (applied with $U=(|\phi_\eps^1|,\dots,|\phi_\eps^m|)$,
see e.g.~\cite[Proposition 1]{bronski} for the scalar case), yields
\begin{equation}
	\label{estcruc0}
\|(|\phi_\eps^1|,\dots,|\phi_\eps^m|)-(r_1(\cdot +y_\eps(t)), \dots,r_m(\cdot +y_\eps(t))\big) \|_{H^1}^{2}
\leq C\hat\Omega_\eps(t)+{\mathcal O}(\eps^2),
\end{equation}
for some $y_\eps(t)\in\R^N$.

\noindent
Moreover, again by the steps of the proof of~\cite[Lemma 3.5]{selvit}, we get
\begin{equation}
	\label{estcruc}
0\leq E_\eps^{{\rm k}}(t)-\frac{1}{2}\sum_{j=1}^m \frac{\Big|\int (p^{A(\eps x)}(x,t))^j \Big|^2}{m_j}
\leq C\hat\Omega_\eps(t)+{\mathcal O}(\eps^2),
\end{equation}
as $\eps$ goes to zero. To achieve this conclusion, one also needs to take into account
the following elementary inequality (following
from the standard Cauchy-Schwarz inequality)
$$
\Big| \int q^A_\eps(x,t)dx  \Big|^2\leq M\sum_{j=1}^m\frac{\Big|\int (p^{A(\eps x)}(x,t))^j dx  \Big|^2}{m_j},
\qquad t\in\R^+.
$$
Furthermore, for any $j=1,\dots,m$ we have the inequality (see~\cite[inequality below
formula (28)]{selvit}; see also~\cite[formula (3.2)]{bronski})
$$
\frac{1}{2} \int \Big |\frac{(p^{A(\eps x)}(x,t))^j}{|\phi_\eps^j(x)|}
-\frac{\big(\int (p^{A(\eps x)}(x,t))^j\big)}{m_j} |\phi_\eps^j(x)|\Big|^2dx\leq
\frac{1}{2}\int \frac{|(p^{A(\eps x)}(x,t))^j|^2}{|\phi_\eps^j(x)|^2}dx
-\frac{1}{2}\frac{\big|\int (p^{A(\eps x)}(x,t))^j \big|^2}{m_j}.
$$
Summing over $j=1,\dots,m$, we get
$$
\frac{1}{2}\sum_{j=1}^m  \int \Big |\frac{(p^{A(\eps x)}(x,t))^j}{|\phi_\eps^j(x)|}
-\frac{\big(\int (p^{A(\eps x)}(x,t))^j\big)}{m_j} |\phi_\eps^j(x)|\Big|^2dx\leq
E_\eps^{{\rm k}}(t) -\frac{1}{2}\sum_{j=1}^m \frac{\big|\int (p^{A(\eps x)}(x,t))^j \big|^2}{m_j}.
$$
In turn, in light of~\eqref{estcruc}, we obtain
\begin{equation}
	\label{estcruc1}
\int \Big |\frac{(p^{A(\eps x)}(x,t))^j}{|\phi_\eps^j(x)|}
-\frac{\big(\int (p^{A(\eps x)}(x,t))^j\big)}{m_j} |\phi_\eps^j(x)|\Big|^2dx
\leq C\hat\Omega_\eps(t)+{\mathcal O}(\eps^2),
\end{equation}
as $\eps$ goes to zero, for any $j=1,\dots,m$. Inequalities~\eqref{estcruc0} and~\eqref{estcruc1} are
precisely what is needed in order to prove~\eqref{secDDIn} of Lemma~\ref{lemmaT0+} (see the proof of Lemma 6.1 in~\cite{squa},
in particular formula~(6.5) therein; see also the proof of Lemma 4.3 in~\cite{squamont}).
Once inequality~\eqref{secDDIn} of Lemma~\ref{lemmaT0+} holds true the rest of the proof
continues as before, yielding the assertion from inequality~\eqref{estcruc0}.

\bigskip
\bigskip

\bigskip
\bigskip

\end{document}